\theoremstyle{plain}
\newtheorem{theorem}{Theorem}[section]
\newtheorem{question}[theorem]{Question}
\newtheorem{lemma}[theorem]{Lemma}
\newtheorem{corollary}[theorem]{Corollary}
\newtheorem{proposition}[theorem]{Proposition}
\newtheorem{conjecture}[theorem]{Conjecture}
\theoremstyle{definition}
\newtheorem{definition}[theorem]{Definition}
\theoremstyle{remark}
\newtheorem{remark}[theorem]{Remark}
\newtheorem{example}[theorem]{Example}
\def\mn{\par\medskip\noindent}
\def\Zz{\mathbb Z}
\def\Qz{\mathbb Q}
\def\Cz{\mathbb{C}}
\def\Rz{\mathbb{R}}
\def\Nz{\mathbb{N}}
\def\disc{{\rm disc}}
\def\supp{{\rm supp}}
\def\deq{\stackrel{\rm def}{=}}
\def\gal{{\rm Gal}(\overline{\Qz}\colon \Qz)}
\def\cap{{\rm cap}}
\title{Integer operators in finite von Neumann algebras}
\author{Andreas Thom}
\address{Andreas Thom, Mathematisches Institut der Universit\"at G\"ottingen,
Bunsenstr. 3-5, D-37073 G\"ottingen, Germany}
\email{thom@uni-math.gwdg.de}
\urladdr{http://www.uni-math.gwdg.de/thom}
\subjclass{16S34, 46L10, 46L50}
\begin{document}

\begin{abstract}
Motivated by the study of spectral properties of self-adjoint operators in the 
integral group ring of a sofic group, we define and study integer operators.
We establish a relation with classical potential theory and in particular the circle of results
obtained by M.\ Fekete and G.\ Szeg\"o, see \cites{fek,feksze,sze}. 
More concretely, we use results by R.\ Rumely, see \cite{rumely}, on 
equidistribution of algebraic integers to obtain a description of those integer operator which 
have spectrum of logarithmic capacity less or equal to one.

Finally, we relate the study of integer operators to a recent 
construction by B.\ and L.\ Petracovici and A.\ Zaharescu, see \cite{za1}.
\end{abstract}

\maketitle

\tableofcontents

\section{Introduction}

Let $\Gamma$ be a discrete group. We consider the Hilbert space $\ell^2\Gamma$ with basis 
$\{\delta_{\gamma} \mid \gamma \in \Gamma\}$.
In \cite{thom2}, we studied spectral properties of operators which arise as integral linear combinations
of left translation operators $\lambda(\gamma) \in B(\ell^2 \Gamma)$, for $\gamma \in \Gamma$ where
$$\lambda(\gamma)\left( \sum_{\eta \in \Gamma} \xi_{\eta}  \delta_{\eta} \right) = \sum_{\eta \in \Gamma} \xi_{\eta} 
\delta_{\gamma \eta}.$$ 
We identify these left convolution operators with the integral group ring $\Zz \Gamma$. All these operators commute with the similarly defined right convolution operators $\rho(\gamma)$, for $\gamma \in \Gamma$:
$$\rho(\gamma) \left( \sum_{\eta \in \Gamma} \xi_{\eta}  \delta_{\eta} \right)
= \sum_{\eta \in \Gamma} \xi_{\eta} 
\delta_{\eta \gamma^{-1}}.$$

Hence,
$$\Zz \Gamma \subset B(\ell^2 \Gamma)^{\rho(\Gamma)} \deq L\Gamma,$$
where $L\Gamma$ denotes the group von Neumann algebra of $\Gamma$. The algebra $L\Gamma$ carries
a faithful, positive and normal trace state:
$$\tau(a) = \langle a\delta_e,\delta_e \rangle.$$ Given the canonical faithful trace, it makes sense to talk about \emph{the} spectral measure of a 
normal operator.
Indeed, all the spectral projections of a normal operator $a \in L\Gamma$ lie in $L\Gamma$ and applying the trace to the usual projection-valued spectral measure gives an ordinary measure on the complex
plane. Given a finite von Neumann algebra with a specified faithful and normal trace $(M,\tau)$, and a normal operator $a \in M$, we denote the resulting spectral measure by $\mu_a$.
In case of group rings $\Zz \Gamma \subset L \Gamma$, this measure encodes most interesting information about the particular element of $\Zz\Gamma$ and
there are many prominent conjectures about its properties, see \cite{lueck}.
\mn
In \cite{thom2}, we defined the notion of integer operator and showed that as long as $\Gamma$ is a sofic group, all self-adjoint operators in $\Zz\Gamma$ are integer operators. The study of those operators resulted
in various restrictions on the spectral measure. In particular, we showed that
there cannot be any eigenvalues which are not algebraic integers, proving the algebraic eigenvalue conjecture of
J.\ Dodziuk, P.\ Linnell, V.\ Matthai, T.\ Schick and S.\ Yates, see \cite{5authors}. 
Moreover, we gave several restrictions on the operator norm of such integer operators and proved that the continuous
part of the spectral measure of a self-adjoint integer operator of norm $2$ is arcsine distributed. In this article
we will extend and generalize these results.
\mn
The paper is organized as follows: Section $1$ is the Introduction. In Section $2$, we define the concepts of \emph{integer measure} and \emph{integer operator} and study their basic properties. We discuss their occurrence and define the basic invariants, i.e.\ the determinant
and the discrimant, which we use to study those operators. In Section $3$, we use potential theory to obtain restrictions
on the possible spectral measures of integer operators. Our reference for the basic and more advanced results in
one variable potential theory is \cite{pot}. We will show that that there is an abundance of integer
measures, by showing that equilibrium measures are integer measures as soon as there support has logarithmic capacity
bigger or equal than one. Although we refer to results of Fekete-Szeg\"o and Rumely, this section is self-contained. In Section $4$, we relate integer measures to recent work of Petracovici, Petracovici and Zaharescu in \cite{za1}. There, the authors constructed a certain metric space out of the variety of Galois orbits in the
complex plane. We identify this space with a subspace of the Wasserstein space of the complex plane. Moreover,
we proof that the Wasserstein space of a proper metric space is proper.

\section{Integer measures}

\subsection{Definitions and conventions}
\label{intro2}

A polynomial $p \in \Zz[t]$ is said to be \emph{monic} if its leading coefficient is equal to $1$.
Any complex number which can arise as root of a monic integer polynomial is said to be an \emph{algebraic integer}.  
Let $p \in \Zz[t]$ be a monic polynomial with integer coefficients. 
It is well-known, that the roots of $p(t)$ are closed under the action of the 
absolute Galois group $\gal$. They form an orbit of a point if and only if $p(t)$ is an irreducible integer polynomial, i.e.\ does not further factorize into integer polynomials. For background on Galois theory,
we refer to \cite{weil}. For our purposes it suffices to think of the set of all algebraic integers as partitioned into
equivalence classes of those which arise always simultaneously as roots of monic integer polynomials.
These equivalence classes are precisely the orbits of the action of the Galois group.
It is a basic fact of Galois theory that the set of algebraic integers form a sub-ring of $\Cz$.

Given a polynomial $p \in \Cz[t]$ of degree $n$ with roots $\alpha_1,\dots,\alpha_n$, we set:
$$\det(p) = \prod_{\alpha_i \neq 0} \alpha_i, \quad \mbox{and} \quad \disc(p) = \prod_{\alpha_i \neq \alpha_j} (\alpha_i - \alpha_j).$$

It is obvious from their definitions, that if $p$ is an monic 
integer polynomial, then $\det(p)$ and $\disc(p)$ are fixed by the Galois group. It is a another 
basic fact, that all algebraic integers which are fixed by the Galois group are elements of $\Zz$. We conclude that
$|\det(p)| \geq 1$ and $|\disc(p)| \geq 1$ in this case.
\mn
We denote the $C^*$-algebra of bounded complex valued continuous functions on $\Cz$ by $C_b(\Cz)$.
For us, a probability measure $\mu$ on the complex plane $\Cz$ is a linear functional 
$$\int_{\Cz} \cdot \ d\mu(z) \colon C_{b}(\Cz) \to \Cz,$$
which satisfies $\int_{\Cz} f \ d\mu(z) \geq 0$ if $f \geq 0$ and $\int_{\Cz} 1 \ d\mu(z) =1$.
A probability measure is said to be \emph{atomic} if it is a weighted sum of point evaluations.

\begin{definition} Let $p \in \Cz[t]$ be a polynomial. We denote by 
$\Delta(p)$ the atomic probability measure on $\Cz$ which is equidistributed
on the roots of $p(t)$, taking multiplicities into account, i.e.
$$\int_{\Cz} f(z) d(\Delta(p))(z) = \frac1{\deg(p)} \sum_{i=1}^{\deg(p)} f(\alpha_i),$$
where $\alpha_1,\dots,\alpha_{\deg(p)}$ is a list of roots of the polynomial $p$.
\end{definition}

We want to study measures which arise as certain weak limits of measures $\Delta(p)$,
where $p$ is an monic polynomial with integer coefficients.
Recall, a probability measure $\mu$ on $\Cz$ 
is the weak limit of a sequence of probability measures $\mu_n$ on $\Cz$, if
$$\int_{\Cz} f d\mu_n \to \int_{\Cz} f d\mu, \quad \forall f \in C_b(\Cz).$$
\begin{definition}\label{inte}
A probability measure on the complex plane is said to be an \emph{integer measure}, if there exists a sequence
of monic polynomials $p_n \in \Zz[t]$, such that
\begin{enumerate}
\item $\Delta(p_n) \to \mu$ weakly, and
\item The supports of $\Delta(p_n)$ are uniformly bounded, i.e. 
$$\exists \lambda>0 \ \forall n \in \Nz \ \forall \alpha \in \Cz \quad p(\alpha)=0 \Rightarrow |\alpha| \leq \lambda.$$
\end{enumerate}
\end{definition}

In the sequel, we will derive some non-trivial properties of the spectral measures of integer operators, which arise through
the subtle interplay between the action of the absolute Galois group $\gal$ and the metric topology on $\Cz$. However,
there is also a trivial consequence of the fact that complex conjugation is continuous with respect to the metric
topology on $\Cz$. In fact: $\int x^n \ d\mu(x) \in \Rz$, for all $n \in \Nz$ and all integer measures $\mu$. Phrasing it differently, 
and denoting by $\tau\colon \Cz \to \Cz$ complex conjugation, then $\tau_*(\mu) = \mu$, for every integer measure $\mu$.
If a measure satisfies $\tau_*(\mu) = \mu$, we call it \emph{symmetric}. The same term is used for subsets of
the complex plane, which are fixed by the complex conjugation.
\mn
In order to justify the definition, we mention an old result by T.\ Motzkin in \cite{Mo}. 
It says that the second condition in Definition \ref{inte} is essential.

\begin{theorem}
Let $\mu$ be a symmetric probability measure on the complex plane. There exists a sequence $p_n \in \Zz[t]$ of
monic polynomials, such that $\Delta(p_n) \to \mu$ weakly.
\end{theorem}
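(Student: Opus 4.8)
The plan is to isolate the arithmetic content of the statement in a single approximation lemma of T.\ Motzkin and to reduce everything else to soft measure theory. Note first that symmetry is forced: a monic $p\in\Zz[t]$ has real coefficients, so its root multiset is invariant under conjugation, $\Delta(p)$ is symmetric, and a weak limit of symmetric measures is symmetric. Conversely, I would first reduce to finitely supported targets. The measures $\Delta(g)$ with $g\in\Rz[t]$ monic are precisely the conjugation-symmetric atomic probability measures whose weights all lie in $\frac1{\deg(g)}\Zz$, and such measures are weakly dense among all symmetric probability measures on $\Cz$: given $\mu$ and $\varepsilon>0$, tightness provides a symmetric compact set carrying mass $>1-\varepsilon$, which I partition into small conjugation-symmetric cells, place point masses of weight $\mu(\text{cell})$ at symmetrically chosen representatives, and finally round the weights to a common denominator $N$. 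Since weak convergence of probability measures on the separable space $\Cz$ is metrizable, it therefore suffices to approximate each $\Delta(g)$, and after a diagonal argument the theorem will follow once I can approximate an arbitrary symmetric equal-weight atomic measure $\nu=\frac1N\sum_{i=1}^N\delta_{w_i}$.

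The crux is the following lemma, essentially due to Motzkin: for any conjugation-symmetric multiset $w_1,\dots,w_N\in\Cz$ and any $\varepsilon>0$ there is a monic $P\in\Zz[t]$ of degree $N+1$, $N$ of whose roots lie within $\varepsilon$ of the $w_i$ (the remaining root being a single real number, forced by symmetry). Granting this, I apply it to the atoms of $\nu$ and examine $\Delta(P)$. Its first $N$ roots reproduce $\nu$ up to $\varepsilon$ and carry total weight $\frac{N}{N+1}\to1$, while the leftover atom has weight $\frac1{N+1}$; because any $f\in C_b(\Cz)$ satisfies $\frac1{N+1}|f(\text{free root})|\le\frac{\|f\|_\infty}{N+1}\to0$, the position of the escaping root is irrelevant and $\Delta(P)\to\nu$ weakly as $N\to\infty$ and $\varepsilon\to0$. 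This is exactly the point at which unboundedness of the supports is used: the escaping root may, and in general must, run off to infinity.

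To prove the lemma I would write $g(t)=\prod_{i=1}^N(t-w_i)\in\Rz[t]$, monic of degree $N$, and look for $P(t)=(t-x)h(t)$ with $x\in\Rz$ a large free root and $h$ monic of degree $N$ whose coefficients $\beta_1,\dots,\beta_N$ stay within a prescribed tolerance of those of $g$ (so that the roots of $h$ remain within $\varepsilon$ of the $w_i$). The coefficients of $P$ are $c_j=\beta_j-x\beta_{j-1}$ with $\beta_0=1$, and the role of the large parameter $x$ is that a perturbation of $\beta_{j-1}$ of size $\eta$ moves $c_j$ by $x\eta$. Hence, solving from the lowest-order coefficient upward, the required integer values of the $c_j$ can be met by adjusting each $\beta_j$ within $O(1/x)$ of its target, and letting $x\to\infty$ drives these perturbations to $0$. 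The main obstacle is precisely this Diophantine step: one must hit the full lattice $\Zz^{N+1}$ of coefficient vectors while keeping every prescribed root within $\varepsilon$, and the joint dependence of the $c_j$ on the single parameter $x$ has to be controlled. For the theorem one may, if convenient, relax to prescribing $N-o(N)$ of the roots, which supplies several free roots and makes the lattice argument routine at no cost to the conclusion. This is also where a bounded construction would be impossible: a finite point set has logarithmic capacity $0$, so by Fekete's theorem no monic integer polynomial of large degree can have all of its roots near it, and it is exactly the escaping root that repairs the integrality of the coefficients.
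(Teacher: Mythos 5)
Your proposal is correct and follows essentially the same route as the paper: reduce by weak density to symmetric atomic measures, invoke Motzkin's theorem to prescribe all but one root of a monic integer polynomial near the atoms, and observe that the single escaping root carries vanishing weight and so does not affect weak convergence against bounded continuous functions. The extra detail you supply (the sketch of Motzkin's lemma and the capacity-zero remark explaining why the escaping root is unavoidable) elaborates on, but does not diverge from, the paper's argument.
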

\begin{proof} Indeed, T.\ Motzkin proved in \cite{Mo} that for all
$\epsilon>0$, all but one of the zeros of a monic integer polynomial can be prescribed to lie 
in an $\epsilon$-neighborhood of arbitrarily given complex numbers, which lie symmetrically in
the complex plane. This implies that every symmetric atomic probability measure can be approximated weakly
by measures of the form $\Delta(p)$ for monic integer polynomials $p \in \Zz[t]$. 
The proof is finished by observing that the set of symmetric atomic probability 
measures is weakly dense in the space
of all symmetric probability measures.
\end{proof}

\subsection{Group rings and integral sub-rings in finite von Neumann algebras}

The following definition differs slightly from Definition $5.1$ in \cite{thom2}. However, all results about integer operators
in \cite{thom2} will be reproved and considerably extended in the sequel.

\begin{definition}
Let $(M,\tau)$ be a finite von Neumann algebra.
A normal operator $a \in M$
is said to be \emph{integer}, if its spectral measures is an integer measures on $\Cz$.
\end{definition}

In many important constructions of finite von Neumann algebras, i.e. as group von Neumann algebras or
as a group measure space construction, there are natural candidates of sub-rings, whose normal elements
should be integer operators. More specifically, we have the following conjecture:

\begin{conjecture} \label{intgroups}
Let $\Gamma$ be a discrete group. 
Then, the subset of normal elements in $\Zz \Gamma \subset L\Gamma$ consist entirely of integer operators.
\end{conjecture}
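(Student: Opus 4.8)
The plan is to prove the conjecture for \emph{sofic} groups $\Gamma$, which is the tractable case; for general $\Gamma$ the statement appears to be genuinely out of reach. Recall that $\Gamma$ is sofic precisely when there are maps $\sigma_n \colon \Gamma \to \mathrm{Sym}(d_n)$ into finite symmetric groups that are asymptotically multiplicative and asymptotically trace-preserving with respect to the normalized counting traces. Given a normal element $a = \sum_{\gamma} a_\gamma \gamma \in \Zz\Gamma$, the first step is to form the matrices $A_n = \sum_{\gamma} a_\gamma\, \sigma_n(\gamma)$, where we identify each permutation $\sigma_n(\gamma)$ with its permutation matrix. Since the $a_\gamma$ are integers and permutation matrices have entries in $\{0,1\}$, the matrix $A_n$ lies in $M_{d_n}(\Zz)$, and therefore its characteristic polynomial $p_n(t) = \det(t\cdot\id - A_n)$ is a \emph{monic} polynomial in $\Zz[t]$; this is the origin of the integrality. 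Condition (2) of Definition \ref{inte} is immediate, since each $\sigma_n(\gamma)$ is a contraction and hence $\|A_n\| \le \sum_\gamma |a_\gamma| =: \lambda$ uniformly in $n$, so all roots of $p_n$ lie in the disc of radius $\lambda$.

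It remains to establish condition (1), namely that $\Delta(p_n) \to \mu_a$ weakly, where $\Delta(p_n)$ is by construction the eigenvalue distribution of $A_n$. Here the self-adjoint case $a = a^*$ is instructive and was settled in \cite{thom2}. The holomorphic moments converge,
$$\int_{\Cz} z^k\, d\Delta(p_n)(z) = \frac{1}{d_n}\operatorname{tr}(A_n^k) \longrightarrow \tau(a^k) = \int_{\Cz} z^k\, d\mu_a(z),$$
by asymptotic multiplicativity; but this alone does not pin down the limit, since holomorphic moments do not determine a measure on $\Cz$. The additional input is that $a = a^*$ with $a_\gamma \in \Zz$ makes $A_n$ asymptotically self-adjoint, $\tfrac{1}{d_n}\|A_n - A_n^*\|_2^2 \to \tau((a-a^*)^*(a-a^*)) = 0$, and Schur's inequality $\sum_i (\operatorname{Im}\lambda_i)^2 \le \tfrac14\|A_n - A_n^*\|_2^2$ then forces, after dividing by $d_n$, the eigenvalues of $A_n$ to concentrate on the real axis. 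Combined with the moment convergence above, this identifies the weak limit of $\Delta(p_n)$ as the measure $\mu_a$, which is supported on $\Rz$.

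The main obstacle — and the reason the statement is a conjecture — appears when one passes from self-adjoint to general normal $a$. The matrices $A_n$ are still asymptotically \emph{normal}, because normality of $a$ gives $\tfrac{1}{d_n}\|A_nA_n^* - A_n^*A_n\|_2^2 \to \tau\big((aa^*-a^*a)^*(aa^*-a^*a)\big) = 0$, and the $*$-distribution (equivalently, the Brown measure) of $A_n$ converges to $\mu_a$. What is missing is any analogue of the Schur inequality that would control the genuinely two-dimensional eigenvalue distribution $\Delta(p_n)$: asymptotic normality in the normalized Hilbert--Schmidt norm does not force the eigenvalue distribution to converge to the $*$-distribution. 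The standard obstruction is the weighted-shift phenomenon, where a single small off-diagonal entry leaves the $*$-distribution near $\delta_0$ yet spreads the eigenvalues over a circle of radius tending to $1$; this is precisely the instability of spectra of non-normal matrices under perturbation. Bridging the eigenvalue distribution and the $*$-distribution for the integer approximants $A_n$ is the essential difficulty. For non-sofic $\Gamma$ there are no finite-dimensional models at all, so establishing the conjecture there would require an entirely different, presumably arithmetic, input.
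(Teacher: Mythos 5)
The statement you were asked about is stated in the paper as a conjecture, and the paper offers no proof of it; so there is no ``paper's own proof'' to match. What you have written is, correctly, not a proof either but an accurate account of exactly how far the known arguments reach and where they stop, and it agrees with the paper's own discussion: the paper records that the conjecture holds for residually finite groups and that the self-adjoint case for sofic $\Gamma$ was settled in \cite{thom2}, and your sofic model ($A_n=\sum_\gamma a_\gamma\sigma_n(\gamma)\in M_{d_n}\Zz$, monic integer characteristic polynomials, the uniform bound $\|A_n\|\le\sum_\gamma|a_\gamma|$ giving condition (2) of Definition \ref{inte}, moment convergence plus the Schur inequality $\sum_i(\operatorname{Im}\lambda_i)^2\le\frac14\|A_n-A_n^*\|_2^2$ pinning down the limit in the self-adjoint case) is the right reconstruction of that argument. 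Your identification of the obstruction in the normal case --- that asymptotic normality of $A_n$ in the normalized Hilbert--Schmidt norm does not force the eigenvalue distribution to track the $*$-distribution, the nilpotent-shift example being the standard witness --- is also the correct diagnosis.

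The one substantive point of comparison is that the paper proposes a specific route around exactly this obstruction, namely the unlabeled conjecture immediately following Conjecture \ref{intgroups}: an integral, rank-metric analogue of H.~Lin's theorem \cite{lin}, asserting that symmetric integer matrices $A,B$ with $\rank(AB-BA)<\delta n$ can, after tensoring with $1_k$, be corrected by perturbations of normalized rank $<\varepsilon$ to \emph{exactly} commuting symmetric integer matrices $A',B'$. This is designed to do precisely what you say is missing: one would replace the merely asymptotically normal integer approximants by genuinely normal ones (built from $A'+iB'$), for which the eigenvalue distribution and the $*$-distribution coincide, while a normalized-rank-small, norm-bounded perturbation does not disturb the limiting $*$-distribution. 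So where your writeup frames the gap as the instability of spectra of non-normal matrices, the paper frames the same gap as the absence of an integral Lin-type theorem; the two formulations are complementary, and it would strengthen your discussion to note that your weighted-shift obstruction is exactly the phenomenon such a rank-metric commuting approximation would neutralize. As a proof of Conjecture \ref{intgroups} your text of course falls short --- it proves only the self-adjoint sofic case --- but that is the state of the art, and the paper claims nothing more.
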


For residually finite groups, the answer to Conjecture \ref{intgroups}
is positive.
In \cite{thom2}, we showed that for $\Gamma$ sofic, Conjecture \ref{intgroups} has an affirmative answer
for self-adjoint elements in $\Zz \Gamma$.  For a definition of the term \emph{sofic} we refer the reader to \cite{thom2}.
Since there is no group known, which is not sofic, it will be obviously rather hard to construct a counterexample to Conjecture \ref{intgroups}. 

An extension of the results of \cite{thom2} to normal elements could be based upon a better understanding of pairs of 
symmetric integer matrices, whose commutator has a small rank compared to the size of the matrices. More
specifically, a positive answer to the following conjecture about integer matrices would imply the desired result:

\begin{conjecture}
For every $\varepsilon>0$, there exists $\delta>0$, such that the following holds:
Let $A,B \in M_n \Zz$ be symmetric matrices and assume that ${\rm rank}(AB-BA) < \delta \cdot n$ holds.
Then, there exists $k \in \Nz$, and symmetric matrices $A',B' \in M_{nk} \Zz$, such that
$${\rm rank}(A\otimes 1_k-A') + {\rm rank}(B\otimes 1_k -B') < \varepsilon \cdot nk, \quad \mbox{and} \quad A'B'=B'A'.$$ 
\end{conjecture}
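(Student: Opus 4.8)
The natural framework here is the bi-invariant \emph{rank metric} $\rho_m(X,Y) = \frac1m \rank(X-Y)$ on $M_m\Zz$, in which the hypothesis reads $\rho_n(AB,BA) < \delta$ and the conclusion asks for a commuting symmetric integer pair within rank-distance $\varepsilon$ of the amplification $(A\otimes 1_k, B\otimes 1_k)$. The amplification is the essential feature: the normalized rank is an amplification invariant, $\rho_{nk}(X\otimes 1_k, Y\otimes 1_k) = \rho_n(X,Y)$, and passing to $\otimes 1_k$ is precisely the device that lets finite integer models approximate a continuous limiting object, exactly as in the sofic setting of \cite{thom2}. I would therefore argue by contradiction: if the statement fails for some fixed $\varepsilon>0$, then for every $\delta=1/N$ there is a pair $(A_N,B_N)$ of symmetric integer matrices of size $n_N$ with $\rho_{n_N}(A_NB_N,B_NA_N)\to 0$, such that no amplification admits an $\varepsilon$-close commuting symmetric integer pair.

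First I would pass to the rank-metric ultraproduct $\mathcal R = \prod_\omega (M_{n_N}\Cz,\rho_{n_N})$, which by the work of Elek and Szab\'o is a von Neumann regular ring carrying a faithful rank function. The images $\bar A,\bar B$ of $(A_N)$ and $(B_N)$ are self-adjoint and \emph{exactly} commute, since $\rho(\bar A\bar B-\bar B\bar A) = \lim_\omega \rho_{n_N}(A_NB_N,B_NA_N)=0$. They generate a commutative $*$-subring, and for every $p\in\Zz[s,t]$ the quantities $\frac{1}{n_N}\rank\, p(A_N,B_N)$ converge along $\omega$; this data assembles into a symmetric, $\gal$-invariant, finitely additive joint spectral distribution $\nu$ for the commuting pair, whose one-variable marginals are governed by the integer-measure theory of Sections $2$ and $3$ (in particular, each marginal is a weak limit of root distributions $\Delta(p_N)$ of the characteristic polynomials $p_N\in\Zz[t]$, so an integer measure in the sense of Definition \ref{inte} once the spectra are known to stay bounded).

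The reconstruction step is then to approximate $(\bar A,\bar B)$, equivalently the distribution $\nu$, by finite commuting symmetric integer pairs within rank-distance $\varepsilon/2$; pulling such an approximation back to cofinally many indices $N$ would contradict the choice of the sequence. Here amplification manufactures the dimensions one needs: one builds $A',B'\in M_{nk}\Zz$ block-diagonally, each block a small commuting pair of integer matrices (for instance of companion type) whose joint root distribution approximates $\nu$ weakly, and then corrects a set of rows and columns of size $o(nk)$ so as to match the amplified originals on a subspace of codimension $o(nk)$.

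The hard part will be carrying out this reconstruction while simultaneously preserving integrality and controlling the rank. Producing commuting symmetric \emph{integer} matrices whose joint spectral measure approximates a prescribed symmetric $\nu$ is a genuine two-variable analogue of the Fekete--Szeg\"o--Rumely circle of results used in Section $3$, and no such two-variable equidistribution theory is presently available; concretely one must realize $\gal$-stable finite subsets of $\Rz^2$ as joint eigenvalue data of commuting integer matrices. The obstruction is exactly the subtle interplay between the action of $\gal$ on joint eigenvalue tuples and the metric topology, now read through the rank metric rather than the usual one, which is why I expect this conjecture to be roughly as deep as the equidistribution inputs it is designed to feed.
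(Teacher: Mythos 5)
This statement is labelled a \emph{conjecture} in the paper, and the paper offers no proof of it; it is presented as an open integral analogue of H.\ Lin's theorem on almost commuting self-adjoint complex matrices. So there is nothing in the paper to compare your argument against, and the relevant question is only whether your sketch closes the problem. It does not, and you say so yourself: the entire content of the conjecture is concentrated in the step you defer, namely producing commuting symmetric \emph{integer} matrices realizing a prescribed $\gal$-stable joint spectral distribution and then matching them to the amplified pair up to rank $o(nk)$. The ultraproduct reduction at the start is a legitimate soft step (it converts the quantifier structure ``for every $\varepsilon$ there is $\delta$'' into an exact commutation statement in a rank-metric limit), but it only repackages the problem: pulling the commuting limit object back to finite integer matrices on cofinally many indices is precisely the assertion being conjectured, so the argument as written is circular rather than incomplete in a repairable way. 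Note also that even the complex, operator-norm version of this reconstruction is the hard content of Lin's theorem, and no rank-metric or integral analogue is known.

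Two further technical points would need attention even if the reconstruction step were available. First, the conjecture imposes no bound on the entries or spectra of $A$ and $B$, so the characteristic polynomials $p_N$ need not have uniformly bounded roots, and the integer-measure machinery of Definition \ref{inte} (which requires uniformly bounded supports, as the Motzkin example shows is essential) does not apply to the marginals without an additional truncation or rescaling argument. Second, the Elek--Szab\'o rank ultraproduct is taken over a field; the images $\bar A,\bar B$ carry no integrality, and the $\gal$-invariance you want for the joint distribution lives only at the level of the finite approximants, so it must be tracked combinatorially rather than read off from the limit. Your closing assessment is the right one: the conjecture is at least as deep as a two-variable Fekete--Szeg\"o--Rumely equidistribution theory that does not currently exist.
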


The preceding conjecture should be seen as an integral analogue of H.\ Lin's seminal result about almost commuting
self-adjoint complex matrices, see \cite{lin}. There, the corresponding result is proved with the operator norm 
in place of the normalized rank.

The evidence for the following conjecture (which is just the group-measure space analogue of Conjecture \ref{intgroups}) 
is less striking.

\begin{conjecture} \label{intact}
Let $(X,\mu)$ be a standard probability space and $\Gamma$ be a discrete group that acts on $(X,\mu)$
by Borel isomorphisms that preserve the measure.
Then, the normal operators in 
the sub-ring $$L^{\infty}(X,\Zz) \rtimes_{\rm alg} \Gamma \subset L^{\infty}(X,\Cz) \rtimes \Gamma$$ 
are integer operators.
\end{conjecture}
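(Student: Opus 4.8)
The plan is to mirror the strategy used for self-adjoint elements of $\Zz\Gamma$ over sofic groups, replacing the sofic approximation of the group by a measure-theoretic sofic approximation of the action, and to isolate the one genuinely new analytic input. It suffices to produce, for a given normal $a \in L^\infty(X,\Zz) \rtimes_{\rm alg} \Gamma$, a sequence of \emph{normal} integer matrices $A_n' \in M_{k_n}\Zz$ of uniformly bounded norm whose normalized eigenvalue distribution $\Delta(\chi_{A_n'})$ converges weakly to the spectral measure $\mu_a$; here $\chi_{A_n'}$ denotes the characteristic polynomial, which is automatically monic and integral, and since $A_n'$ is normal its eigenvalue distribution coincides with its spectral distribution. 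By Definition \ref{inte} this exhibits $\mu_a$ as an integer measure. Because $a$ and the $A_n'$ are normal, the weak convergence $\Delta(\chi_{A_n'}) \to \mu_a$ will follow once all mixed $*$-moments converge, i.e. ${\rm tr}_{k_n}\bigl(w(A_n',(A_n')^*)\bigr) \to \tau\bigl(w(a,a^*)\bigr)$ for every word $w$ in two letters, together with the uniform norm bound.

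First I would build an approximate, not-yet-normal finite model. Writing $a = \sum_{\gamma \in F} f_\gamma u_\gamma$ with $F \subset \Gamma$ finite and each $f_\gamma \in L^\infty(X,\Zz)$ a simple function, I would invoke a sofic approximation of the probability-measure-preserving action --- available whenever $\Gamma$ is sofic, since every such action is sofic as a measured groupoid: finite sets $V_n$ with the uniform measure, maps $\gamma \mapsto \sigma_n(\gamma) \in {\rm Sym}(V_n)$ that are asymptotically multiplicative in normalized Hamming distance, and integer-valued models $\tilde f_\gamma \colon V_n \to \Zz$ whose joint distribution under the $\sigma_n$-dynamics converges to that of the $f_\gamma$ under $\Gamma$. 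Setting $A_n = \sum_{\gamma \in F} D(\tilde f_\gamma)\, P_{\sigma_n(\gamma)}$, where $D(\tilde f_\gamma)$ is the diagonal integer matrix of values of $\tilde f_\gamma$ and $P_{\sigma_n(\gamma)}$ the permutation matrix, one obtains a \emph{real integer} matrix of uniformly bounded norm. The approximation properties yield ${\rm tr}_{k_n}(w(A_n,A_n^*)) \to \tau(w(a,a^*))$ for all words $w$; in particular $\tfrac1{k_n}{\rm tr}\bigl((A_nA_n^*-A_n^*A_n)^*(A_nA_n^*-A_n^*A_n)\bigr) \to \tau(|aa^*-a^*a|^2)=0$ since $a$ is normal. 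A spectral cut-off, using the uniform norm bound, then converts this Hilbert--Schmidt smallness into normalized-rank smallness: $\rank(A_nA_n^*-A_n^*A_n)/k_n \to 0$. Thus $A_n$ is an integer matrix that is almost normal in normalized rank.

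The hard part --- and the reason the statement is only a conjecture --- is to upgrade $A_n$ to an \emph{exactly} normal integer matrix without disturbing the limiting distribution. What is needed is an integral, normalized-rank version of H.\ Lin's theorem: an almost normal integer matrix should be a small-normalized-rank perturbation of an exactly normal integer matrix $A_n'$, i.e. $A_n' \in M_{k_n}\Zz$ with $A_n'(A_n')^*=(A_n')^*A_n'$ and $\rank(A_n-A_n')/k_n \to 0$. This is precisely the normal analogue of the almost-commuting-integer-matrices conjecture recorded above, and the integrality constraint is what makes it delicate: the usual functional-calculus proofs of Lin's theorem leave the integer lattice. Granting such a statement, the small-rank perturbation changes every normalized word-trace by $O(\rank(A_n-A_n')/k_n) \to 0$, so ${\rm tr}_{k_n}(w(A_n',(A_n')^*)) \to \tau(w(a,a^*))$ as well; since $A_n'$ is normal, this moment convergence together with the uniform norm bound yields $\Delta(\chi_{A_n'}) \to \mu_a$, completing the argument.

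Finally, I would record the special cases in which the obstacle evaporates, paralleling the residually finite situation for group rings. If the action admits exact finite models --- for instance a profinite action, i.e. an inverse limit of actions on finite probability spaces, or a residually finite $\Gamma$ acting through a cofinal tower of finite quotient actions --- then the $\sigma_n$ are honest homomorphisms, $a \mapsto A_n$ is an honest $*$-homomorphism into $M_{k_n}\Zz$, and $A_n$ is already exactly normal. No integral Lin theorem is then required, and the conjecture holds unconditionally for such actions. The general case remains conditional on the integral almost-normality repair, which I expect to be the main obstacle.
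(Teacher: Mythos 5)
The statement you were asked to prove is not a theorem in the paper but Conjecture \ref{intact}; the paper offers no proof, and explicitly states that apart from the amenable case and pro-finite actions of residually finite groups there is very little evidence for it. Your writeup is accordingly not a proof but a conditional reduction, and you are commendably upfront about the main missing ingredient: an integral, normalized-rank analogue of Lin's theorem that repairs an almost normal integer matrix to an exactly normal one. This is exactly parallel to the paper's own (unproven) conjecture about pairs of symmetric integer matrices with small-rank commutator, which the author proposes as the route to the group-ring case for normal elements. So your overall architecture matches the author's intended strategy, and your observation that profinite actions yield honest finite models (hence unconditional cases) matches the evidence the paper cites.

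However, there is a second gap that you pass over too quickly. You assert that a sofic approximation of the action is ``available whenever $\Gamma$ is sofic, since every such action is sofic as a measured groupoid.'' This is not a known fact: soficity of a group does not imply soficity of all of its probability-measure-preserving actions (only special classes such as profinite and Bernoulli actions are known to be sofic), and the paper itself remarks that it is not at all clear how to define soficity for Borel actions nor whether the resulting class is large or small. Moreover, Conjecture \ref{intact} is stated for arbitrary discrete groups, not just sofic ones, so even the first step of your construction --- producing the approximate integer matrix models $A_n$ with the correct limiting $*$-moments --- is unavailable in general. Your argument is therefore doubly conditional: on the existence of a sofic approximation of the action, and on the integral Lin-type repair. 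Stated as such, it is a reasonable research program consistent with the paper's discussion, but it does not establish the conjecture, and the first conditionality deserves to be flagged as explicitly as the second.
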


Apart from the amenable case, and pro-finite actions of residually finite groups, there seems to very little
evidence, that Conjecture \ref{intact} ist true. However, this conjecture is in the line of problems like 
Connes embedding problem to which no counterexamples are in sight. 
It would be nice to have a suitable notion of \emph{soficity} 
for Borel actions on standard probability spaces. Although several definitions come to mind immediately,
is not at all clear whether the class of such actions is large or small. Note, that also the Connes embedding
question is essentially open for such von Neumann algebras. An affirmative answer of Question
\ref{intact} would have interesting applications in the theory of $\ell^2$-torsion of discrete groups.

\subsection{Properties of integer measures}
We first recall some properties of integer measures that were obtained in \cite{thom2}. Since the results
are not stated precisely in this form in \cite{thom2}, we will also include a short proof.
For $\beta \in \Cz$ a complex number and $\varepsilon>0$, we denote by $B(\beta,\varepsilon)$
the open disk of radius $\varepsilon$ with centre $\beta$.

\begin{theorem}
Let $\mu$ be an integer measure.
\begin{enumerate}
\item The atoms of $\mu$ are located at algebraic integers and the set of atoms forms an orbit
under the action of the absolute Galois group $\gal$.
\item Galois-conjugate atoms appear with the same size.
\end{enumerate}
\end{theorem}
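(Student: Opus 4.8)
The plan is to deduce both assertions from a single structural fact: for an integer measure, all of the mass of an atom is carried, in the approximating polynomials, by a root of \emph{exact} high multiplicity, and such exact multiplicities are automatically balanced across a Galois orbit. Throughout, fix approximating monic polynomials $p_n \in \Zz[t]$ with $\Delta(p_n) \to \mu$ and all roots in $\overline{B(0,\lambda)}$; we may assume $\lambda \geq 1$. The two elementary inputs are that the roots of a monic integer polynomial are algebraic integers occurring in complete Galois orbits, with the same multiplicity along each orbit, and that $|\disc(p_n)| \geq 1$ whenever $\disc(p_n) \neq 0$, so that $\log|\disc(p_n)| = \sum_{\alpha_i \neq \alpha_j}\log|\alpha_i - \alpha_j| \geq 0$.

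For the first assertion, let $\beta$ be an atom of mass $m = \mu(\{\beta\}) > 0$ and fix a small $\varepsilon \in (0, \nicefrac12)$. By the Portmanteau inequalities the number $N_n$ of roots of $p_n$ (with multiplicity) in $B(\beta,\varepsilon)$ satisfies $N_n/d_n \to m$ as first $n \to \infty$ and then $\varepsilon \to 0$, where $d_n = \deg p_n$. Writing the distinct root-values inside $B(\beta,\varepsilon)$ with multiplicities $\mu_1,\dots,\mu_r$, so that $\sum_l \mu_l = N_n$, I split $\log|\disc(p_n)| \geq 0$ into the pairs lying inside the cluster, each of modulus $< 2\varepsilon$, and the remaining pairs, each of modulus $\leq 2\lambda$. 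This yields
$$\left(N_n^2 - \sum_l \mu_l^2\right)\,|\log(2\varepsilon)| \;<\; d_n^2 \log(2\lambda),$$
hence $N_n^2 - \sum_l \mu_l^2 \leq \eta(\varepsilon)\,d_n^2$ with $\eta(\varepsilon) \to 0$ as $\varepsilon \to 0$. Since $\max_l \mu_l \geq (\sum_l \mu_l^2)/N_n$, there is a single root $v_n \in B(\beta,\varepsilon)$ whose multiplicity is at least $(m - \delta(\varepsilon))\,d_n$ for large $n$, with $\delta(\varepsilon) \to 0$.

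The whole Galois orbit of $v_n$ consists of roots of $p_n$ of this same multiplicity, so the orbit has at most $1/(m - \delta(\varepsilon))$ elements; thus $v_n$ has bounded degree while all its conjugates lie in $\overline{B(0,\lambda)}$. There are only finitely many algebraic integers of bounded degree with all conjugates bounded by $\lambda$; call this finite set $F$. As the $v_n$ lie in $F \cap B(\beta,\varepsilon)$ for arbitrarily small $\varepsilon$, the point $\beta$ lies in the closure of the finite set $F$, hence $\beta \in F$ is an algebraic integer. Re-running the estimate with $\varepsilon$ small enough that $F \cap B(\beta,\varepsilon) = \{\beta\}$ forces $v_n = \beta$ for large $n$, so $\beta$ is a root of $p_n$ of multiplicity $s_n$ with $s_n/d_n \to m$. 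Every Galois conjugate $\beta'$ of $\beta$ is then a root of $p_n$ of the same multiplicity $s_n$, and the Portmanteau inequality for the closed set $\{\beta'\}$ gives $\mu(\{\beta'\}) \geq \limsup_n s_n/d_n = m > 0$. Thus every conjugate of an atom is again an atom, the set of atoms is a union of Galois orbits, and the first assertion follows.

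For the second assertion, the previous paragraph shows that for conjugate atoms $\beta, \beta'$ one always has $\mu(\{\beta'\}) \geq \mu(\{\beta\})$. Choosing within a fixed orbit a conjugate of maximal mass and applying this inequality in both directions forces all masses in the orbit to coincide. The one genuine obstacle is that the Galois action is wildly discontinuous on $\Cz$, so mass cannot be transported between conjugates by any naive limiting procedure; the role of the discriminant estimate is precisely to bypass this, by showing that the atomic part of $\mu$ is produced by roots of \emph{exact} multiplicity, for which Galois balance is automatic.
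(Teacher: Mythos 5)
Your proof is correct and takes essentially the same route as the paper's: both arguments rest on the lower bound $|\disc(p_n)|\geq 1$ for monic integer polynomials, which forces the mass of an atom to be carried by a root whose multiplicity is a positive proportion of $\deg p_n$, hence whose Galois orbit has bounded degree (so lies in a finite set), and both then invoke the Galois-invariance of root multiplicities to equalize the masses over the orbit. The only packaging difference is that the paper first reduces to irreducible factors, whereas you extract the high-multiplicity root directly from the full polynomial via $\max_l \mu_l \geq \bigl(\sum_l \mu_l^2\bigr)/N_n$; your treatment of the second claim is, if anything, spelled out more completely than the paper's.
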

\begin{proof}
Let $p_n$ be a sequence of monic integer polynomials, such that $\Delta(p_n) \to \mu$ and all zeros of the polynomials
$p_n$ lie within a disk of radius $\lambda$.

Let $\beta \in \Cz$ be an arbitrary complex number. From weak convergence and monotonicity, we get:
\begin{equation} \label{atoms} \mu(\{\beta\}) 
\leq \lim_{\epsilon \to 0} \liminf_{n \to \infty}\Delta(p_n)(B(\beta,\epsilon)). \end{equation}
Assume that $\beta$ is not an algebraic integer. We have to exclude the possibility, that for $\epsilon>0$ arbitrarily small, 
there exist a monic integer polynomials $p$, such that the proportion of zeros within an $\epsilon$-distance of $\beta$ is greater than some $\delta>0$. Clearly, we can restrict to irreducible polynomials.

Assume that such a $\delta>0$ exists. If $p$ is irreducible of degree $n$, then all roots are distinct and we get
\begin{equation} \label{eq5}
1 \leq |\disc(p)| \leq (2\epsilon)^ {\delta n(\delta n-1)} (2\lambda)^{n^2}.\end{equation}
Here, we used that $|\disc(p)| \geq 1$ for monic integer polynomials, which was justified in the beginning of Section
\ref{intro2}.
Taking logarithms in Equation \ref{eq5}, we get:
$$ \delta n (\delta n-1) \leq n^2 \frac{\log 2 \lambda}{|\log 2 \epsilon|},$$
and taking $\epsilon$ sufficiently small we get an upper bound on $n$. Since there are only finitely many
monic integer polynomials of bounded degree which have all their roots in $B(0,\lambda)$, the set of zeros of
such polynomials is discrete and consequently, $\beta$ is an algebraic integer. This contradicts the 
hypothesis that $\delta>0$ exists and proves the first claim.

The argument shows, that if there is an atom at some complex number $\beta$, then it has to be an atom
of $\Delta(p_n)$ for large $n$ and actually 
$$ \mu(\{\beta\}) \leq \liminf_{n \to \infty} \Delta(p_n)(\{\beta\}).$$ The inequality
$$\limsup_{n \to \infty} \Delta(p_n)(\{\beta\}) \leq \mu(\{\beta\})$$ follows from weak convergence
$\Delta(p_n) \to \mu$, and hence \begin{equation} \label{eqapp} \mu(\{\beta\}) = \lim_{n \to \infty} \Delta(p_n)(\{\beta\}).\end{equation} 
This also implies the second claim since the zeros of $p_n$ form 
orbits under the absolute Galois group $\gal$ and the multiplicities are constant along orbits. Thus the proof is finished.
\end{proof}

\begin{proposition} \label{prop1}
Let $\mu$ be an integer measure. The measure $\mu$ is a convex combination of an atomic integer measure and
an integer measure which does not contain atoms.
\end{proposition}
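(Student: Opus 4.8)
The plan is to use the Lebesgue decomposition of $\mu$ into its atomic and continuous parts and to show that, after renormalization, each part is again an integer measure. Write $\mu = \mu_a + \mu_c$, where $\mu_a$ is the (at most countable) sum of point masses of $\mu$ and $\mu_c$ has no atoms, and set $c = \mu_a(\Cz) \in [0,1]$. If $c \in \{0,1\}$ then one of the two parts vanishes and the assertion is immediate, so assume $0 < c < 1$ and put $\mu_{\mathrm{atomic}} = c^{-1}\mu_a$ and $\mu_{\mathrm{cont}} = (1-c)^{-1}\mu_c$, both probability measures. It then suffices to prove that both are integer measures, since $\mu = c\,\mu_{\mathrm{atomic}} + (1-c)\,\mu_{\mathrm{cont}}$.

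For the atomic part I would invoke the previous theorem: the atoms of $\mu$ form a single orbit $\{\beta_1,\dots,\beta_k\}$ of $\gal$ consisting of algebraic integers, all carrying the same mass $w$, so that $c = kw$. Consequently $\mu_{\mathrm{atomic}} = \tfrac1k\sum_{i}\delta_{\beta_i} = \Delta(p)$, where $p \in \Zz[t]$ is the monic irreducible minimal polynomial of the orbit; taking the constant sequence $p_n = p$ exhibits $\mu_{\mathrm{atomic}}$ as an integer measure.

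The substance of the argument is the continuous part. Fix a sequence of monic integer polynomials $p_n$ with $\Delta(p_n) \to \mu$ and all roots in the disk of radius $\lambda$. By Equation \ref{eqapp} the common multiplicity $m_n$ of the roots $\beta_1,\dots,\beta_k$ in $p_n$ (equal along the Galois orbit because $p_n$ has integer coefficients) satisfies $m_n/\deg(p_n) \to w$ for $n$ large. I would then set $q_n = p_n/p^{m_n}$; since $p$ is monic, irreducible, and each $\beta_i$ occurs in $p_n$ with exact multiplicity $m_n$, the polynomial $q_n$ is again monic with integer coefficients, its roots are precisely the non-atomic roots of $p_n$, and they lie in the same disk of radius $\lambda$. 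Decomposing the integral of $f \in C_b(\Cz)$ against $\Delta(p_n)$ into the contribution of the atoms and that of the remaining roots and passing to the limit, one obtains, using $\deg(q_n)/\deg(p_n) \to 1-c \neq 0$, that $\Delta(q_n) \to (1-c)^{-1}\mu_c = \mu_{\mathrm{cont}}$ weakly. Hence $\mu_{\mathrm{cont}}$ is an integer measure, and it is atomless by construction, which completes the decomposition.

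The main obstacle is the bookkeeping in this last limit: one must check that the multiplicities of the Galois-conjugate atoms in $p_n$ genuinely coincide, so that the single factor $p^{m_n}$ removes them all, and that they stabilize with the correct relative density $w$, so that dividing by $p^{m_n}$ neither destroys integrality nor distorts the limiting mass. Everything else is the routine splitting of a weak limit into its atomic and continuous contributions.
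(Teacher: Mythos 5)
The paper states Proposition \ref{prop1} without proof, so there is no official argument to compare against; your proof is the natural one and is essentially sound. The only point that needs care is your appeal to the preceding theorem to conclude that the atoms form a \emph{single} finite Galois orbit. Although that theorem is literally phrased this way, the set of atoms of an integer measure is in general only a countable union of Galois orbits, possibly infinitely many (for instance, weak limits of $\Delta\bigl(\prod_{j\le n}\Phi_j^{a_{n,j}}\bigr)$ for cyclotomic polynomials $\Phi_j$ can have atoms on infinitely many orbits of roots of unity), so taking a single minimal polynomial $p$ and a constant sequence does not cover the general case. Your construction survives the necessary amendment: list the orbits of atoms with minimal polynomials $r_1,r_2,\dots$ and weights $c_1,c_2,\dots$, exhibit the normalized atomic part as an integer measure via $\Delta$ of finite products $\prod_{j\le N}r_j^{b_{N,j}}$ with exponents chosen so that the relative degrees approximate $c_j/c$, and for the continuous part divide $p_n$ by $\prod_{j\le J_n}r_j^{m_{n,j}}$ with $J_n\to\infty$ slowly enough that the removed proportion of roots converges to $c$. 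With that modification the remaining steps of your argument --- equality of the multiplicities $m_{n,j}$ along each orbit because the only irreducible monic factor vanishing at a conjugate of $\beta$ is its minimal polynomial, integrality and monicity of the quotient polynomial by Gauss's lemma, the identity $\Delta(p_n)=\tfrac{\deg q_n}{\deg p_n}\Delta(q_n)+\tfrac{k m_n}{\deg p_n}\Delta(p)$, and the passage to the limit using Equation \ref{eqapp} together with $\deg q_n/\deg p_n\to 1-c>0$ --- are all correct, and the limit of the $\Delta(q_n)$ is atomless because it equals $(1-c)^{-1}\mu_c$ by uniqueness of weak limits.
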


It is conjectured that for a torsion-free group $\Gamma$, the spectral measure of a normal operator in $\Zz\Gamma$
has neither an atomic nor a singular part. The statement about the atomic part is a consequence of Atiyah's conjecture,
see \cite{5authors}. The next theorem is a regularity result for the non-atomic part of the spectral
measure of such operators. Since it applies also to
groups that contain torsion, the conclusion has to be much weaker than the conjecture above.

\begin{theorem}
Let $\mu$ be an integer measure not containing atoms. 
There exists a constant $C$, such that for all $\beta \in \Cz$ and $0<\epsilon< 1$
$$\mu(B(\beta,\epsilon)) \leq C \cdot |\log 2\epsilon\, |^{-\frac12}.$$
\end{theorem}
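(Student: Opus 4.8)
The plan is to run the discriminant estimate of the previous theorem (compare \eqref{eq5}), but at the level of individual Galois orbits rather than of the whole polynomial, so that the multiplicities of $p_n$ — which are invisible to the discriminant — are handled correctly. Fix a sequence of monic integer polynomials $p_n$ with $\Delta(p_n)\to\mu$ and all roots in $\overline{B(0,\lambda)}$; enlarging $\lambda$ we may assume $2\lambda>1$, and since the asserted inequality is trivial once $|\log 2\epsilon|$ is bounded (because $\mu(B(\beta,\epsilon))\le 1$), I only need to treat small $\epsilon$, where $\log 2\epsilon<0$. Choosing a continuous cut-off between the closed ball $\overline{B(\beta,\epsilon)}$ and a slightly larger open ball and using weak convergence, it suffices to bound $\limsup_n \Delta(p_n)(B(\beta,\epsilon'))$ for $\epsilon'$ slightly larger than $\epsilon$ and then let $\epsilon'\downarrow\epsilon$. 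Thus the whole problem reduces to an estimate on the proportion $N_n/n$ of roots of $p_n$, counted with multiplicity, lying in a fixed ball.

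The key algebraic observation is that, because $p_n\in\Zz[t]$, its factorization $p_n=\prod_q q^{e_q}$ into monic irreducible integer polynomials assigns to each Galois orbit a single multiplicity $e_q$. Writing $d_q=\deg q$ and letting $\sigma_q$ be the number of roots of $q$ in the ball, the mass is $N_n/n=\sum_q (e_qd_q/n)\,(\sigma_q/d_q)$, a convex combination of the per-orbit proportions $\sigma_q/d_q$ with weights $w_q=e_qd_q/n$ summing to $1$. Hence the multiplicities enter only through the weights, and it suffices to control each ratio $\sigma_q/d_q$. For this I apply the discriminant bound to the irreducible, hence squarefree, factor $q$ itself: from $1\le|\disc(q)|\le (2\epsilon)^{\sigma_q(\sigma_q-1)}(2\lambda)^{d_q^2}$ one obtains, after taking logarithms, $\sigma_q/d_q\le\sqrt{2\log2\lambda}\,|\log2\epsilon|^{-\frac12}$ whenever $\sigma_q\ge 2$.

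The remaining difficulty is the orbits with $\sigma_q\le 1$, for which the discriminant is silent and $\sigma_q/d_q$ can be as large as $1/d_q$. Here I split the orbits at the degree threshold $D_0=|\log2\epsilon|^{1/2}$. For $d_q>D_0$ one has $1/d_q<|\log2\epsilon|^{-\frac12}$, so together with the previous estimate every orbit of large degree satisfies $\sigma_q/d_q\le C'|\log2\epsilon|^{-\frac12}$ with $C'=\max(\sqrt{2\log2\lambda},1)$, and these orbits contribute at most $C'|\log2\epsilon|^{-\frac12}$ to the convex combination. For $d_q\le D_0$ I use condition (2) of Definition \ref{inte}: every orbit of degree at most $D_0$ has all its roots in $\overline{B(0,\lambda)}$, so its minimal polynomial has coefficients bounded in terms of $D_0$ and $\lambda$, whence there are only finitely many such orbits. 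By equation \eqref{eqapp}, each of these finitely many orbits has mass $e_qd_q/n=d_q\cdot\Delta(p_n)(\{\alpha\})\to d_q\,\mu(\{\alpha\})=0$ as $n\to\infty$, since $\mu$ has no atoms; summing over the finite set, the total weight carried by the low-degree orbits tends to $0$. Consequently $\limsup_n N_n/n\le C'|\log2\epsilon|^{-\frac12}$, which after the reduction of the first paragraph yields the theorem with $C=C'$.

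I expect the main obstacle to be exactly the interplay between multiplicities and the discriminant: a direct application of the earlier estimate only bounds the number of \emph{distinct} roots in the ball and says nothing about their multiplicities, while passing to the squarefree part of $p_n$ destroys the limit $\mu$ (a small number of distinct points carrying large, Galois-invariant multiplicity can produce a non-atomic limit). The orbit-wise reformulation, using that multiplicities are constant along Galois orbits, is precisely what makes the multiplicities cancel; the only genuinely separate input is the finiteness of bounded-degree algebraic integers in a bounded region, combined with the absence of atoms, to dispose of the low-degree orbits.
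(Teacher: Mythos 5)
Your proof is correct, and it rests on the same engine as the paper's — the lower bound $|\disc(q)|\geq 1$ for monic irreducible $q\in\Zz[t]$ played off against the upper bound coming from pairs of roots that are $2\epsilon$-close — but you organize the estimate differently, and the difference is substantive. The paper applies the estimate once, to the full list of roots of $p_n$ with multiplicities, bounds every multiplicity by $D_n/d_n$ where $d_n$ is the minimal degree of an irreducible factor, and then kills the resulting error term $1/d_n$ by asserting that $d_n\to\infty$ because $\mu$ has no atoms and there are only finitely many bounded-degree monic integer polynomials with all roots in $\overline{B}(0,\lambda)$. That assertion is stronger than what atomlessness actually delivers: a fixed irreducible factor of degree $2$ may divide every $p_n$ without producing an atom in the limit, since its mass $e_q d_q/D_n$ can still tend to zero; atomlessness only forces the total \emph{weight} of the bounded-degree factors to vanish, not the factors themselves to disappear. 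Your orbit-wise decomposition proves exactly this weaker (and correct) statement and shows that it suffices: writing $\Delta(p_n)(B(\beta,\epsilon))$ as the convex combination $\sum_q w_q(\sigma_q/d_q)$ makes the multiplicities cancel into the weights, the per-orbit discriminant bound controls every orbit with $\sigma_q\geq 2$, single-root orbits of degree larger than $|\log 2\epsilon|^{1/2}$ are controlled trivially, and each of the finitely many low-degree orbits carries weight $d_q\,\Delta(p_n)(\{\alpha\})\to d_q\,\mu(\{\alpha\})=0$ by the portmanteau inequality for closed sets. So your argument is not only correct but repairs a genuine gap in the paper's proof (the unjustified claim $\lim_{n\to\infty} d_n=\infty$), at the cost of a slightly worse constant, $\sqrt{2\log 2\lambda}$ in place of $\sqrt{\log 2\lambda}$, coming from replacing $\sigma_q-1$ by $\sigma_q/2$.
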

\begin{proof}
Let $p_n$ be a sequence of monic integer polynomials, such that $\Delta(p_n) \to \mu$ weakly, and
the size of the zeros is bounded by some $\lambda>0$.
We know:
$$\mu(B(\beta,\epsilon)) \leq \liminf_{n \to \infty} \Delta(p_n)(B(\beta,\epsilon)).$$
For simplicity, set $j_{n,\epsilon} = \Delta(p_n)(B(\beta,\epsilon))$.
Each polynomial $p_n$ splits into irreducible factors $$p = q_{1,n}^{k_{1,n}} \cdot \dots \cdot q_{l_n,n}^{k_{l_n,n}}. $$
Let $D_n$ be the degree of $p_n$ and $d_n$ be the minimum degree of the factors of $p_n$. 
Since there are no atoms in $\mu$ and the set of polynomials with bounded degree and all roots within
$B(0,\lambda)$ is finite, the degrees of the factors have to tend to infinity as $n \to \infty$, i.e.
$\lim_{n \to \infty} d_n = \infty$.
It follows, that:
\begin{equation} \label{eq2} 1 \leq \prod_{\alpha_i \neq \alpha_j} |\alpha_i - \alpha_j | \\
\leq (2\epsilon)^{j_{n,\epsilon} D_n^2(j_{n,\epsilon} - 1/d_n)} (2\lambda)^{D_n^2},\end{equation}
where $\alpha_1,\dots,\alpha_{D_n}$ is a list of roots of $p_n$.
In the second inequality above, we have used that the highest power to which an irreducible 
factor can appear is bounded from above by $D_n/d_n$.
Hence, every root can appear at most $D_n/d_n$ times. We conclude from Equation \ref{eq2} that
$$j_{n,\epsilon} D_n^2(j_{n,\epsilon} - 1/d_n)
\leq D_n^2 \frac{\log 2 \lambda}{|\log2\epsilon|},$$
and hence
$$\mu(B(\beta,\epsilon)) \leq \liminf_{n \to \infty} \ j_{n,\epsilon} \leq \lim_{n \to \infty} \left(\frac{\log 2 \lambda}{|\log2\epsilon|} + \frac1{d_n} \right)^{1/2}= \left(\frac{\log 2 \lambda}{|\log2\epsilon|} \right)^{1/2}.$$
This finishes the proof.
\end{proof}

\subsection{Continuity of the determinant and the discriminant}

In this section we investigate the continuity properties of the functional 
$$\log \det(\mu) = \int_{\Cz} \log|z| d\mu(z)$$ on the a suitable space of integer measures.
It turns out that upper semi-continuity can be easily established whereas we find an explicit sequence of 
integer polynomials which shows that $\log \det$ cannot be lower semi-continuous.

\begin{definition}
Let $\mu,\nu$ be probability measures on $\Cz$. We set, for a Borel subset $B \subset  \Cz$:
$$\mu \ast \nu(B) = (\mu \times \nu)(a^{-1}(B)),$$ where $a\colon \Cz^2 \to \Cz$ with $a(\alpha,\beta) = \alpha - \beta$.
\end{definition}

Consider the Banach space $C_{\lambda}=C(\overline{B}(0,\lambda))$. Let $\Lambda_{\lambda} \subset C_{\lambda}'$ be
the weak-$*$-closure of the set of integer measures. 

\begin{proposition}
Convolution defines a separately weak-$*$-continuous map
$$.\ast.\colon\Lambda_{\lambda} \times \Lambda_{\lambda} \to \Lambda_{2\lambda}$$
\end{proposition}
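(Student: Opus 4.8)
The plan is to establish the two assertions essentially independently: separate continuity by a soft uniform-continuity argument, and the fact that the image lands in $\Lambda_{2\lambda}$ by reducing to atomic measures $\Delta(p)$, where an arithmetic (integrality) computation does the work. I would begin with separate continuity, since it is also what makes the limiting argument run. Fix $\nu \in \Lambda_{\lambda}$ and $f \in C_{2\lambda}$. Unwinding the definition of convolution gives $\int f\, d(\mu \ast \nu) = \iint f(\alpha-\beta)\, d\mu(\alpha)\, d\nu(\beta) = \int g_{\nu}\, d\mu$, where $g_{\nu}(\alpha) = \int f(\alpha-\beta)\, d\nu(\beta)$. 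For $\alpha,\alpha' \in \overline{B}(0,\lambda)$ and $\beta \in \overline{B}(0,\lambda)$ the arguments $\alpha-\beta,\alpha'-\beta$ lie in $\overline{B}(0,2\lambda)$, so by uniform continuity of $f$ on that compact disk one has $|g_{\nu}(\alpha)-g_{\nu}(\alpha')| \leq \omega_f(|\alpha-\alpha'|)$, whence $g_{\nu} \in C_{\lambda}$. Thus $\mu \mapsto \int f\, d(\mu\ast\nu) = \int g_{\nu}\, d\mu$ is weak-$*$ continuous for every $f$, which is exactly separate weak-$*$ continuity in the first variable; the second variable is symmetric up to the sign in the difference map. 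This simultaneously confirms that $\mu\ast\nu$ is a genuine element of $C_{2\lambda}'$, being a probability measure supported in $\overline{B}(0,2\lambda)$.

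The algebraic core is to show that convolution preserves atomic integer measures. Let $p,q \in \Zz[t]$ be monic with roots $\alpha_1,\dots,\alpha_m$ and $\beta_1,\dots,\beta_k$ in $\overline{B}(0,\lambda)$. Then $\Delta(p)\ast\Delta(q)$ is the uniform measure on the $mk$ differences $\alpha_i-\beta_j$, counted with multiplicity, that is, $\Delta(p)\ast\Delta(q) = \Delta(r)$ with $r(t) = \prod_{i,j}\bigl(t-(\alpha_i-\beta_j)\bigr)$. This $r$ is visibly monic of degree $mk$, and its roots satisfy $|\alpha_i-\beta_j| \leq 2\lambda$. The decisive point is that $r \in \Zz[t]$: using $\prod_j(t-\alpha_i+\beta_j) = (-1)^k q(\alpha_i - t)$ together with $\prod_i H(\alpha_i) = \mathrm{Res}_s\bigl(p(s),H(s)\bigr)$ for monic $p$ and $H(s)=q(s-t)$, I would identify $r(t) = (-1)^{mk}\,\mathrm{Res}_s\bigl(p(s), q(s-t)\bigr)$. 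Since $q(s-t)$ is monic in $s$ with coefficients in $\Zz[t]$, the resultant is an integer-coefficient polynomial in the (integer) coefficients of $p$ and in $t$, so $r\in\Zz[t]$; equivalently, the coefficients of $r$ are symmetric integer polynomials separately in the $\alpha_i$ and in the $\beta_j$, hence integer polynomials in the elementary symmetric functions of each, which up to sign are the coefficients of $p$ and $q$. Either way $\Delta(p)\ast\Delta(q) = \Delta(r) \in \Lambda_{2\lambda}$. I expect this to be the main obstacle, as it is the only place where the arithmetic interacts with the convolution; the remaining steps are soft functional analysis.

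Finally I would pass to the limit. Since $\Lambda_{\lambda}$ is the weak-$*$ closure of the measures $\Delta(p)$ with roots in $\overline{B}(0,\lambda)$, and the closed unit ball of $C_{\lambda}'$ is weak-$*$ metrizable ($C_\lambda$ being separable), every $\mu,\nu \in \Lambda_{\lambda}$ are weak-$*$ limits of sequences $\Delta(p_n)$ and $\Delta(q_m)$. Fixing $\mu$ and applying separate continuity in the second variable, $\mu\ast\Delta(q_m) \to \mu\ast\nu$; as $\Lambda_{2\lambda}$ is weak-$*$ closed it suffices to show each $\mu\ast\Delta(q_m) \in \Lambda_{2\lambda}$. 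Fixing such an $m$ and applying separate continuity in the first variable, $\Delta(p_n)\ast\Delta(q_m) \to \mu\ast\Delta(q_m)$, and by the previous paragraph each $\Delta(p_n)\ast\Delta(q_m)=\Delta(r_{n,m})$ lies in $\Lambda_{2\lambda}$, so the limit does too. Hence $\mu\ast\nu \in \Lambda_{2\lambda}$. This iterated-limit scheme deliberately uses only separate, not joint, continuity, which is precisely what the first paragraph supplies.
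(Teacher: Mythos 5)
Your proposal is correct and follows the same overall scheme as the paper: reduce to the atomic case $\Delta(p)\ast\Delta(q)$ and extend by (separate) weak-$*$ continuity. The one genuine difference lies in how the arithmetic core is justified. The paper observes that the set of pairwise sums/differences of the roots of $p$ and $q$ is invariant under $\gal$ and consists of algebraic integers, hence is the zero set of a monic integer polynomial; you instead identify $\prod_{i,j}\bigl(t-(\alpha_i-\beta_j)\bigr)$ with $(-1)^{mk}\,\mathrm{Res}_s\bigl(p(s),q(s-t)\bigr)$ and read off integrality from the resultant. Your route is more explicit and, in particular, handles multiplicities correctly: $\Delta(p)\ast\Delta(q)$ is the uniform measure on the \emph{multiset} of differences, and the resultant produces exactly the monic integer polynomial with those multiplicities, whereas the paper's phrasing in terms of the zero \emph{set} glosses over this point (and also writes sums $\alpha_i+\beta_l$ where the stated definition of $\ast$ uses differences). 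Your final paragraph also spells out the iterated-limit argument hiding behind the paper's ``the result extends by continuity,'' correctly using metrizability of the weak-$*$ topology on bounded sets and the fact that only separate continuity is available. Nothing is missing; your write-up is simply a fully detailed version of the paper's sketch with a resultant-based substitute for the Galois-invariance step.
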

\begin{proof}
The only non-trivial assertion is that the convolution of integer measures is indeed an integer measure.
Let $p,q \in \Zz[t]$ be monic polynomials with roots $\alpha_1,\dots,\alpha_n$ and $\beta_1,\dots,\beta_k$.
Clearly, the set $\{\alpha_i + \beta_l \mid 1 \leq i \leq n, 1 \leq l \leq k\}$ is invariant under the action of $\gal$.
This implies that this set is zero-set of a monic integer polynomial. The result extends by continuity.
\end{proof}

\begin{lemma}
Let $\lambda \geq 1$. The map 
$$\log \det \colon \Lambda_{\lambda} \to \Rz, \quad \log \det(\mu) = \int_{z \neq 0} \log |z|\ d\mu(z).$$
is well-defined and weakly upper semi-continuous, i.e.
$$\int_{z \neq 0} \log |z|\ d\mu(z) \geq \limsup_{n \to \infty} \int_{z \neq 0} \log |z|\ d\mu_n(z),$$
for every weakly convergence sequence of integer measures.
\end{lemma}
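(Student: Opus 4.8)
The plan is to recognize $\log\det$ as the integral of the bounded-above, upper semi-continuous function $z\mapsto\log|z|$, with the single subtlety that its singularity at the origin is deliberately excised from the integral. First I would settle well-definedness: since every $\mu\in\Lambda_\lambda$ is a weak-$*$ limit of measures supported in $\overline B(0,\lambda)$, it is itself supported there, and as $\lambda\ge1$ we have $\log|z|\le\log\lambda$ on the support. Thus the positive part of the integrand is bounded, and $\int_{z\neq0}\log|z|\,d\mu$ is unambiguously defined as an element of $[-\infty,\log\lambda]$, with no $\infty-\infty$ indeterminacy.

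For the semi-continuity inequality I would approximate $\log|z|$ from above by the truncations $g_k(z)=\max(\log|z|,-k)$, which lie in $C_\lambda$, are bounded, and decrease pointwise to $\log|z|$ on $\{z\neq0\}$. Fixing $k$ and splitting off the atom at the origin, where $g_k(0)=-k$, gives for each $n$
$$\int_{z\neq0}\log|z|\,d\mu_n\le\int_{z\neq0}g_k\,d\mu_n=\int_{\overline B(0,\lambda)}g_k\,d\mu_n+k\,\mu_n(\{0\}).$$
Weak-$*$ convergence tested against $g_k$ yields $\int_{\overline B(0,\lambda)}g_k\,d\mu_n\to\int_{\overline B(0,\lambda)}g_k\,d\mu$, while the portmanteau theorem applied to the closed set $\{0\}$ gives $\limsup_n\mu_n(\{0\})\le\mu(\{0\})$. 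Taking $\limsup_n$ above and reassembling the atom exactly as before,
$$\limsup_{n\to\infty}\int_{z\neq0}\log|z|\,d\mu_n\le\int_{\overline B(0,\lambda)}g_k\,d\mu+k\,\mu(\{0\})=\int_{z\neq0}g_k\,d\mu.$$
Letting $k\to\infty$ and applying monotone convergence to the decreasing family $g_k\downarrow\log|z|$ (dominated above by the bounded, hence integrable, $g_1$) produces $\limsup_n\int_{z\neq0}\log|z|\,d\mu_n\le\int_{z\neq0}\log|z|\,d\mu$, as required.

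To confirm that the target is genuinely $\Rz$ I would test the inequality just proved against a defining sequence $\Delta(p_n)\to\mu$ with $p_n$ monic integer: since $\det(p_n)$ is a nonzero integer, $\int_{z\neq0}\log|z|\,d\Delta(p_n)=\frac1{\deg p_n}\log|\det(p_n)|\ge0$, so upper semi-continuity forces $\log\det(\mu)\ge\limsup_n\log\det(\Delta(p_n))\ge0$; together with the bound $\log\lambda$ this places $\log\det(\mu)$ in $[0,\log\lambda]$.

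The main obstacle is precisely the interaction between the excised singularity and a possible atom of the limit measure at the origin. A naive truncation estimate for $\int_{\overline B(0,\lambda)}\log|z|\,d\mu$ would collapse to $-\infty$ whenever $\mu(\{0\})>0$, whereas the functional here ignores that atom; the device that rescues the argument is the exact cancellation of the $\pm k\,\mu(\{0\})$ terms combined with the one-sided estimate $\limsup_n\mu_n(\{0\})\le\mu(\{0\})$. Everything else is the standard principle that integration of a bounded-above upper semi-continuous function is weak-$*$ upper semi-continuous.
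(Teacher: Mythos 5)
Your proof is correct, and it implements the same underlying principle as the paper (integration of a function that is bounded above and upper semi-continuous is weak-$*$ upper semi-continuous) by a genuinely different mechanism. The paper passes to the positive function $f(z)=\max\{\log\lambda-\log|z|,0\}$ and invokes a Fatou-type lemma for weakly convergent sequences of measures, proved by decomposing $\int f\,d\mu$ over the level sets $f^{-1}([n,n+1))$ and interchanging sum and limit; it then reads off both boundedness and semi-continuity from $\log\det(\Delta(p))\geq 0$. You instead truncate from above by $g_k(z)=\max(\log|z|,-k)\in C_\lambda$, use weak-$*$ convergence on each $g_k$, and let $k\to\infty$ by monotone convergence. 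The payoff of your route is that it confronts head-on the one real subtlety of the statement, namely the excision of $z=0$: your explicit bookkeeping of the $\pm k\,\mu_n(\{0\})$ terms together with the portmanteau bound $\limsup_n\mu_n(\{0\})\leq\mu(\{0\})$ (valid since the sign of the correction is favorable) makes the argument airtight even when the limit measure charges the origin, whereas the paper's chosen $f$ is $+\infty$ at $0$ and its Fatou lemma tacitly needs the level sets to be $\mu$-continuity sets for the limit interchange. Your concluding observation that $\det(p_n)$ is a nonzero integer, so $\log\det(\Delta(p_n))\geq 0$ and hence $\log\det(\mu)\in[0,\log\lambda]$, matches the paper's justification of well-definedness. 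The only cosmetic caveat is that for a general element of $\Lambda_\lambda$ (the weak-$*$ closure, not just the set of integer measures) the lower bound requires one further limit along an approximating sequence of integer measures, but the paper is equally brief on this point.
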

\begin{proof}
Upper semi-continuity in the weak-$*$ topology follows from a variant of Fatou's Lemma:

\begin{lemma}[Fatou] Let $\mu_k \to \mu$ weakly and let $f\colon \Cz \to \Rz$ be a measurable and  positive function.
If $$\liminf_{k \to \infty} \int_{\Cz} f(x)\ d\mu_k(x) < \infty,$$ then the right side of the inequality exists and
$$\liminf_{k \to \infty} \int_{\Cz} f(x)\ d\mu_k(x) \geq \int_{\Cz} f(x)\ d\mu(x).$$
\end{lemma}
\begin{proof}
\begin{eqnarray} \nonumber
\int_{\Cz} f(x) d\mu(x) &=& \sum_{n=0}^{\infty} \int_{f^{-1}([n,n+1))} f(x) \ d\mu(x) \\\nonumber
&=&\sum_{n=0}^{\infty} \lim_{k \to \infty }\int_{f^{-1}([n,n+1))} f(x) \ d\mu_k \\\nonumber
&\leq & \liminf_{k \to \infty } \sum_{n=0}^{\infty} \int_{f^{-1}([n,n+1))} f(x)\ d\mu_k \\ 
&=& \liminf_{k \to \infty } \int_{\Cz} f(x)\ d\mu_k \\ \nonumber
\end{eqnarray}
Here, we used the more familiar variant of Fatou's Lemma to get the inequality.\end{proof}

Specifying the preceding lemma to $f(z)= \max\{\log \lambda - \log|z|,0\}$, this implies 
that $\log \det$ is well-defined (i.e.\ bounded on $\Lambda_{\lambda}$) and upper semi-continuous on $\Lambda_{\lambda}$.
For this, note that always $\log \det(\Delta(p)) \geq 0$ since $p$ is a monic integer polynomial and hence: 
$$ \liminf_{k \to \infty} \int_{z \neq 0} \max\{\log \lambda - \log|z|,0\}\ d (\Delta(p_k))(z) \leq \pi \lambda^2 \cdot \log \lambda.$$ 

\end{proof}

\begin{corollary} \label{disccont}
The map 
$$\log \disc \colon \Lambda_{\lambda} \times \Lambda_{\lambda} \to \Rz, 
\quad \log \disc(\mu,\nu) = \int_{z \neq w} \log |z-w|\ d\mu(z) d\nu(w).$$
is well-defined upper semi-continuous from the weak-$*$-topology.
\end{corollary}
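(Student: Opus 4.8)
The plan is to recognize the discriminant pairing as the determinant of a convolution, and then to invoke the preceding Lemma together with the Proposition on convolutions. Concretely, I would first record the identity
$$\log\disc(\mu,\nu) = \log\det(\mu \ast \nu), \qquad \mu,\nu \in \Lambda_{\lambda}.$$
This follows from the definition of $\mu \ast \nu$ as the pushforward of $\mu \times \nu$ along $a(\alpha,\beta) = \alpha - \beta$: since $a^{-1}(\{0\}) = \{\alpha = \beta\}$, the change-of-variables formula for pushforward measures gives $\int_{u\neq 0}\log|u|\,d(\mu\ast\nu)(u) = \int_{\alpha\neq\beta}\log|\alpha-\beta|\,d\mu(\alpha)\,d\nu(\beta)$. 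Because the Proposition guarantees $\mu\ast\nu \in \Lambda_{2\lambda}$, and the Lemma (applied at radius $2\lambda \geq 1$) shows that $\log\det$ is well-defined and real-valued on $\Lambda_{2\lambda}$, this identity already yields well-definedness of $\log\disc$ on $\Lambda_{\lambda}\times\Lambda_{\lambda}$.

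For the upper semi-continuity I would argue along sequences, which is legitimate because $C_{\lambda}$ is separable and hence the norm-bounded set $\Lambda_{\lambda}$ is weak-$*$ metrizable. Suppose $\mu_n \to \mu$ and $\nu_n \to \nu$ weakly in $\Lambda_{\lambda}$. The key intermediate step is to promote this to joint weak convergence of the products, $\mu_n\times\nu_n \to \mu\times\nu$, on the compact polydisk $\overline{B}(0,\lambda)^2$. Here the separate continuity recorded in the Proposition must be strengthened: using Stone--Weierstrass, every $F \in C(\overline{B}(0,\lambda)^2)$ is a uniform limit of finite sums $\sum_i g_i(\alpha)h_i(\beta)$, and for each such sum $\int \sum_i g_i h_i\, d(\mu_n\times\nu_n) = \sum_i \left(\int g_i\,d\mu_n\right)\left(\int h_i\,d\nu_n\right) \to \sum_i \left(\int g_i\,d\mu\right)\left(\int h_i\,d\nu\right)$; the uniform approximation together with the uniform mass bound $\|\mu_n\times\nu_n\| = 1$ controls the error. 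Pushing forward along the continuous map $a$ then gives $\mu_n\ast\nu_n \to \mu\ast\nu$ weakly in $\Lambda_{2\lambda}$.

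Finally I would combine these facts: by the Lemma, $\log\det$ is weakly upper semi-continuous on $\Lambda_{2\lambda}$, so
$$\limsup_{n\to\infty}\log\disc(\mu_n,\nu_n) = \limsup_{n\to\infty}\log\det(\mu_n\ast\nu_n) \leq \log\det(\mu\ast\nu) = \log\disc(\mu,\nu),$$
which is exactly the asserted upper semi-continuity. I expect the only genuine obstacle to be the passage from separate to joint weak convergence of the product measures; once that is in place (via the Stone--Weierstrass argument above, which is valid precisely because we work on a compact set), everything reduces mechanically to the already-established semi-continuity of $\log\det$ and the already-established fact that convolution lands in $\Lambda_{2\lambda}$.
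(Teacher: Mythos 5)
Your proof is correct and follows essentially the same route as the paper: the identity $\log \disc(\mu,\nu) = \log\det(\mu \ast \nu)$ together with the factorization through $\Lambda_{2\lambda}$ and the upper semi-continuity of $\log\det$ established in the preceding Lemma. The only difference is that you explicitly upgrade the separate weak-$*$ continuity of convolution to joint continuity via Stone--Weierstrass on the compact polydisk --- a detail the paper's one-line proof glosses over but which is genuinely needed for semi-continuity in both variables simultaneously, so this is a welcome addition rather than a deviation.
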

\begin{proof}
We write $\log \disc(\mu,\nu) = \log \det(\mu \ast {\nu}),$ using the factorization:
$$\Lambda_{\lambda} \times \Lambda_{\lambda} \to \Lambda_{2 \lambda} \stackrel{\log \det}{\to} \Rz.$$
\end{proof}

For simplicity, we set $D(\mu) = \log \disc(\mu,\mu).$ The following example shows that $\log \det$ is in general not continuous. I owe this example to K.\ Ramsay.
\begin{example}
Let $0 \neq \lambda \in 2\Nz$. Consider 
$p_n(t) = t^n + 2^{-1} \lambda^{n-1} t +2$. For $|t|= \lambda$ and sufficiently large $n$, we have
$$|t^n+2| \geq \lambda^n - 2 > 2^{-1} \lambda^n = |2^{-1} \lambda^{n-1} t|.$$
Hence, by Rouch\'{e}'s Theorem, $p_n(t)$ has all its roots within $|t|= \lambda$, i.e.\ $\Delta(p_n) \in \Lambda_{\lambda}$.
For $|t|= \lambda^{1-n/2} $ and $n$ sufficiently large, we have
$$|t^n + 2| \leq 3 < |2^{-1} \lambda^{n/2}| = |2^{-1} \lambda^{n-1} t|.$$
Again, using Rouch\'{e}'s Theorem, $p_n$ will have precisely one root $t_0$ inside $B(0,\lambda^{1-n/2})\subset B(0,1)$.
Moreover, by a similar argument, this is the only root in $B(0,1)$. We compute
$$\liminf_{n \to \infty} \frac{|\log t_0|}n \geq \lim_{n \to \infty} \frac{\log \lambda^{n/2-1}}n = \frac{\log{\lambda}}2.$$
This implies, that
$$\limsup_{n \to \infty} \int_{|z| \leq 1} \log|z| \ d\Delta(p_n) \leq - \frac{\log{\lambda}}2,$$
whereas $\int_{|z| \leq 1} \log|z| \ d\mu =0$, where $\mu$ denotes some limiting point of the sequence $\Delta(p_n)$.
Indeed, note that $\mu(B(0,1)) \leq \limsup_{n \to \infty} \Delta(p_n)(B(0,1)) = 0$. Clearly, 
$$\lim_{n \to \infty} \int_{|z| \geq 1} \log|z| \ d\Delta(p_n) = \int_{|z| \geq 1} \log|z| \ d\mu,$$
and hence
$$\limsup_{n \to \infty} \int_{z \neq 0} \log|z| \ d\Delta(p_n) < \int_{z \neq 0} \log|z| \ d\mu.$$
This shows that $\log \det$ is not continuous.
\end{example}

\section{Potential theory and applications}

\subsection{Free entropy, logarithmic energy, capacity and discriminants}

Let $\mu$ be a probability measure. We set:
\begin{eqnarray*}I(\mu) &=& \iint_{E\times E} - \log |z-w| d\mu(z) d\mu(w)\\
&\deq&\lim_{n \to \infty}\iint_{E\times E} \min\{n, - \log |z-w|\} d\mu(z) d\mu(w)
\end{eqnarray*}
This quantity is called the \emph{logarithmic energy} of the measure $\mu$. If $\mu$ is the spectral measure of a self-adjoint
operator $a \in M$, D.\ Voiculescu defined the free entropy $\chi(a)$, which is closely 
related to the logarithmic energy of the measure $\mu$. In fact, we have
$$\chi(a) = \frac34 + \frac{\log(2\pi)}2 - I(\mu_a).$$

The following proposition is the basic observation, which gives a first restriction on the possible values
of $I(\mu)$ and $\chi(a)$, if $a \in M$ is an integer operator.

\begin{proposition} \label{comp}
Let $p_n \in \Zz[t]$ be a sequence of monic polynomials having all their roots in a disk $B(0,\lambda)$ for
some $\lambda>0$. Assume that $\Delta(p_n) \to \mu$ weakly. If $\mu$ has no atoms, then
$$I(\mu) + \limsup_{n \to \infty} D(\Delta(p_n)) \leq 0.$$
\end{proposition}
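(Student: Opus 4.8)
The plan is to reduce the statement to two ingredients already available: the upper semi-continuity of the discriminant functional from Corollary \ref{disccont}, and the elementary identity $D(\mu)=-I(\mu)$ that holds precisely because $\mu$ is assumed to have no atoms. First I would record that each $\Delta(p_n)$ is an integer measure supported in $B(0,\lambda)$, hence lies in $\Lambda_\lambda$, and that the diagonal sequence $(\Delta(p_n),\Delta(p_n))$ converges weak-$*$ to $(\mu,\mu)$ in $\Lambda_\lambda\times\Lambda_\lambda$. Applying Corollary \ref{disccont} then gives
$$\limsup_{n\to\infty} D(\Delta(p_n)) = \limsup_{n\to\infty}\log\disc(\Delta(p_n),\Delta(p_n)) \leq \log\disc(\mu,\mu) = D(\mu).$$
So it suffices to prove $D(\mu)\leq -I(\mu)$; in fact I will establish equality.

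Next I would unwind the two functionals and compare them. Since $|z-w|\leq 2\lambda$ on $\supp(\mu)$, the integrand $-\log|z-w|$ appearing in $I(\mu)$ is bounded below, so $I(\mu)$ is an honest integral of a function bounded below and takes a value in $(-\infty,+\infty]$, while $D(\mu)$ is finite by Corollary \ref{disccont} (it is computed there as $\log\det(\mu\ast\mu)$). The crucial point is that $\mu$ having no atoms forces the diagonal to be $\mu\times\mu$-null: indeed $(\mu\times\mu)(\{z=w\})=\int \mu(\{w\})\,d\mu(w)=0$. Hence the restriction $z\neq w$ in the definition of $D$ is immaterial, and
$$D(\mu) = \iint \log|z-w|\,d\mu(z)\,d\mu(w) = -I(\mu)$$
as extended reals; the finiteness of $D(\mu)$ then also shows $I(\mu)$ is finite.

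Combining the two displays yields $I(\mu) + \limsup_{n\to\infty} D(\Delta(p_n)) \leq I(\mu) + D(\mu) = 0$, which is the assertion. The only genuine work lies in the second step: one must check that the functional $D=\log\disc$ produced abstractly via $D(\mu)=\log\det(\mu\ast\mu)$ really coincides with the logarithmic double integral, and that the atomless hypothesis is exactly what eliminates the singular diagonal contribution, so that this integral equals $-I(\mu)$ rather than something strictly larger. I expect this measure-theoretic identification to be the main obstacle, though it is routine once the diagonal is seen to be null. It is worth emphasizing that the hypothesis is essential: an atom of mass $m$ at $\beta$ would make $I(\mu)=+\infty$ (the diagonal spike contributes $m^2\cdot(+\infty)$), whereas the discriminants $D(\Delta(p_n))$ remain bounded by $\log 2\lambda$, so the cancellation breaks down and the inequality fails.
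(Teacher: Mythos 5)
Your argument is correct and follows essentially the same route as the paper: the paper's proof likewise combines the upper semi-continuity of $D$ from Corollary \ref{disccont} with the identity $I(\mu)=-D(\mu)$ for atomless $\mu$. You have merely supplied the details the paper leaves implicit, namely that the absence of atoms makes the diagonal $\mu\times\mu$-null so the restriction $z\neq w$ is harmless.
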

\begin{proof}
If $\mu$ has no atoms, then $I(\mu) = - D(\mu)$. Since the discriminant is upper semi-continuous by Corollary
\ref{disccont}, $D(\mu) \geq \limsup_{n \to \infty} D(\Delta(p_n))$. This implies the claim.
\end{proof}

\begin{theorem} \label{inequality}
Let $(M,\tau)$ be a finite von Neumann algebra and let $a \in M$ be self-adjoint integer operator without atoms. Then,
$$I(\mu_a) \leq 0, \quad \mbox{and} \quad \chi(a) \geq \frac34 + \frac{\log(2\pi)}2.$$
\end{theorem}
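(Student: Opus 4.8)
The plan is to deduce the bound on $\chi(a)$ from the bound on $I(\mu_a)$ using the identity $\chi(a) = \frac34 + \frac{\log(2\pi)}2 - I(\mu_a)$, which reduces the whole theorem to showing $I(\mu_a) \leq 0$. Since $a$ is an integer operator, its spectral measure $\mu_a$ is an integer measure, so I fix a sequence of monic integer polynomials $p_n$ whose roots all lie in some disk $B(0,\lambda)$ and with $\Delta(p_n) \to \mu_a$ weakly. As $a$ is atom-free, $\mu_a$ has no atoms, and Proposition \ref{comp} gives $I(\mu_a) + \limsup_{n\to\infty} D(\Delta(p_n)) \leq 0$. Hence it suffices to prove $\limsup_{n\to\infty} D(\Delta(p_n)) \geq 0$, and for this I will establish the pointwise inequality $D(\Delta(p)) \geq 0$ for \emph{every} monic integer polynomial $p$.

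To prove $D(\Delta(p)) \geq 0$, write the distinct roots of $p$ as $\gamma_1,\dots,\gamma_m$ with multiplicities $M_1,\dots,M_m$, and set $D = \deg(p) = \sum_j M_j$. Then $\Delta(p)$ assigns mass $M_j/D$ to $\gamma_j$, and since the integral defining $D$ omits the diagonal $z=w$,
$$D(\Delta(p)) = \frac1{D^2}\sum_{j \neq j'} M_j M_{j'} \log|\gamma_j - \gamma_{j'}| = \frac1{D^2} \log\Bigl| \prod_{j \neq j'} (\gamma_j - \gamma_{j'})^{M_j M_{j'}} \Bigr|.$$
So the claim reduces to showing that the weighted discriminant $R(p) \deq \prod_{j \neq j'} (\gamma_j - \gamma_{j'})^{M_j M_{j'}}$ has absolute value at least $1$.

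I would argue that $R(p)$ is a nonzero rational integer. It is a product of powers of differences of algebraic integers, hence an algebraic integer. The multiplicities $M_j$ are constant along the orbits of $\gal$, because two Galois-conjugate roots lie in the same irreducible factor of $p$ and therefore occur to the same power; consequently every $\sigma \in \gal$ permutes the pairs $(\gamma_j,\gamma_{j'})$ while preserving the exponents $M_j M_{j'}$, so $R(p)$ is Galois-invariant and thus lies in $\Zz$. Finally $R(p) \neq 0$ since every factor $\gamma_j - \gamma_{j'}$ with $j \neq j'$ is nonzero, whence $|R(p)| \geq 1$ and $D(\Delta(p)) \geq 0$. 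Combining this with the consequence of Proposition \ref{comp} above yields $I(\mu_a) \leq 0$, and the stated bound on $\chi(a)$ follows at once.

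The main obstacle is exactly the presence of repeated roots: if $p$ is not square-free, its genuine discriminant $\disc(p)$ vanishes, so one cannot invoke $|\disc(p)| \geq 1$ directly as in the atom-free regularity theorem. The remedy is the weighted quantity $R(p)$, whose integrality can be made completely explicit through the factorization $p = \prod_i q_i^{k_i}$ into distinct monic irreducibles: one then has $|R(p)| = \prod_i |\disc(q_i)|^{k_i^2} \prod_{i \neq i'} |\mathrm{Res}(q_i,q_{i'})|^{k_i k_{i'}}$, and each discriminant together with each resultant of coprime monic integer polynomials is a nonzero integer, so $|R(p)| \geq 1$ as required.
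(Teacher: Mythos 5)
Your proposal is correct and follows essentially the same route as the paper: reduce to $I(\mu_a)\leq 0$ via the identity for $\chi$, apply Proposition \ref{comp}, and observe that $D(\Delta(p))\geq 0$ because the off-diagonal product of root differences (the paper's $\disc(p)$, your $R(p)$) is a nonzero Galois-fixed algebraic integer, hence a rational integer of absolute value at least $1$. Your explicit treatment of repeated roots via multiplicities and the resultant factorization only spells out what the paper leaves implicit in its definition of $\disc(p)$ as a product over \emph{distinct} roots.
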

\begin{proof}
This result follows from Proposition \ref{comp}, since $\mu_a$ is an integer measure and
$$\exp D(\Delta(p_n)) =  \prod_{\alpha_i \neq \alpha_j} |\alpha_i - \alpha_j|  \geq 1,$$ for all monic polynomials $p_n \in \Zz[t]$. Here, $\alpha_1,\dots,\alpha_n$ is a list of roots of $p_n$.
\end{proof}

Let $E\subset \Cz$ be a compact sub-set of the complex numbers. Recall, the \emph{logarithmic capacity} $\cap(E)$ is defined as
$$-\log(\cap(E)) = \inf\ \{I(\mu)\mid \mu \mbox{ is supported on $E$}\}.$$
If $\cap(E) >0$, then there 
exists a \emph{unique} measure $\mu_E$ on $E$, which is called the \emph{equilibrium measure} $\mu_E$ on $E$,
such that $-\log(\cap(E)) = I(\mu_E)$.

\begin{example}
The equilibrium measure on the unit circle is just the Haar measure, whereas the equilibrium measure on the interval
$[-2,2]$ is the arcsine law, which has density with respect to the Lebesgue measure:
$$\frac{d\mu}{d \lambda} = \frac1{\pi \sqrt{4-x^2}}.$$
In both cases, the logarithmic capacity is equal to one.
\end{example}

\begin{example}
Let $p \in \Cz[t]$ be a monic polynomial and consider the lemniscate
$$E= \{z \in \Cz \mid |p(z)| = R\}.$$
Then, for the logarithmic capacity, we get: $\gamma(E) = R^{1/\deg(p)}.$ This is best understood through
the 
explicit Green's function and some well-known results from potential theory, which we recall in Section \ref{green}. For more details, see also \cite{pot}.
\end{example}

\subsection{The work of Fekete and Szeg\"o}

We first want to recall a main result in the work of M.\ Fekete and G.\ Szeg\"o, which relates the logarithmic capacity
of a subset of the complex plane with the number of orbits of algebraic integers, that can be found near those sets.
A set of algebraic integers is said to be \emph{complete}, if it is closed under the action of the absolute Galois group.

\begin{theorem}[Fekete-Szeg\"o] \label{feke0}
Let $E \subset \Cz$ be a compact set which is symmetric with respect to complex conjugation.
\begin{enumerate}
\item If the logarithmic capacity satisfies $\cap(E)<1$, then there exists an open neighborhood $U$ of $E$, such that only
a finite number of complete sets of algebraic integers are contained in $U$.
\item If the logarithmic capacity satisfies $\cap(E)\geq 1$, then every open neighborhood $U$ of $E$ contains an
infinite number of complete sets of algebraic integers.
\end{enumerate}
\end{theorem}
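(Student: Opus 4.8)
The plan is to treat the two directions separately. The finiteness statement (1) is the more elementary one: it follows by playing the lower bound $|\disc(p)|\ge 1$ for monic integer polynomials, established at the beginning of Section \ref{intro2}, against the upper bound coming from the transfinite diameter. The existence statement (2) is the genuinely hard direction, requiring an actual construction of monic integer polynomials whose roots are forced to lie near $E$, and that is where I expect the real work to be.

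For (1) I would first invoke the outer continuity of logarithmic capacity: the closed $\delta$--neighborhoods $E_\delta$ decrease to $E$, so $\cap(E_\delta)\downarrow\cap(E)<1$ (see \cite{pot}), and I may fix $\delta$ with $\cap(E_\delta)<1$ and set $U=\mathrm{int}(E_\delta)$. A complete set of algebraic integers contained in $U$ is a union of Galois orbits, each of which is the (automatically distinct) root set $\{\alpha_1,\dots,\alpha_n\}$ of some monic irreducible $p\in\Zz[t]$; for such $p$ we have $\prod_{i<j}|\alpha_i-\alpha_j|^2=|\disc(p)|\ge 1$. On the other hand all $\alpha_i$ lie in the compact set $E_\delta$, so by Fekete's theorem identifying capacity with the limit of the $n$-th order diameters $d_n(E_\delta)\downarrow\cap(E_\delta)<1$ (again \cite{pot}),
$$\prod_{i<j}|\alpha_i-\alpha_j|\le d_n(E_\delta)^{\binom{n}{2}}.$$
Choosing $n_0$ with $d_{n_0}(E_\delta)<1$, the right-hand side is $<1$ for every $n\ge n_0$, contradicting $|\disc(p)|\ge 1$. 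Hence every orbit inside $U$ has degree $<n_0$; there are only finitely many monic integer polynomials of degree $<n_0$ with roots in the bounded set $E_\delta$, hence finitely many orbits, hence finitely many complete sets, proving (1).

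For (2) I would first reduce to a strict inequality: given the neighborhood $U$, pick a compact symmetric $K$ with $E\subseteq K\subset U$; by monotonicity $\cap(K)\ge\cap(E)\ge 1$, and by fattening $K$ slightly (using strict monotonicity of capacity under taking neighborhoods) I may assume $\cap(K)>1$. It then suffices to produce infinitely many distinct Galois orbits inside $K$. My plan is to construct, for arbitrarily large $N$, a monic integer polynomial $p_N$ with all roots in $K$ whose root-counting measure $\Delta(p_N)$ approximates the equilibrium measure $\mu_K$. The logarithmic potential of $\mu_K$ is constant $=-\log\cap(K)<0$ on $K$, so a monic polynomial whose roots equidistribute like $\mu_K$ grows like $\cap(K)^N>1$ on $K$; this exponential slack is exactly what should allow one to round a Fekete/equilibrium polynomial to integer coefficients while keeping its roots inside $K$. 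Concretely I would organize this as a volume count: the coefficient vectors of real monic polynomials of degree $N$ with roots in $K$ fill a region of $\Rz^N$ whose volume is, by the Vandermonde--Jacobian of the map from roots to elementary symmetric functions, governed by $\int_{K^N}\prod_{i<j}|z_i-z_j|$ and therefore of order $\cap(K)^{N^2}$; since $\cap(K)>1$ this super-exponential volume should force the region to contain many $\Zz^N$-lattice points. Comparing the super-polynomial number of monic integer polynomials so produced against the merely polynomial number of degree-$N$ products assembled from any fixed finite collection of orbits then shows that new Galois orbits must keep appearing.

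The hard part is direction (2), and within it precisely the step that converts ``large coefficient-region volume'' into honest monic integer polynomials whose roots are verifiably inside $U$ (not merely near $E$) and which introduce genuinely new orbits: a naive volume-to-lattice-point passage fails without control of the shape of the region, and one must simultaneously confine every root to $U$. This is exactly where the finer potential theory has to enter — the behaviour of the equilibrium potential off $K$, and Rumely's equidistribution theorem \cite{rumely}, which both produces algebraic integers clustering near a set of capacity $\ge 1$ and constrains where their conjugates may lie. I would import that machinery to make the rounding construction rigorous; by comparison, the real-versus-complex symmetry bookkeeping in the Vandermonde volume estimate is a secondary and essentially routine technicality.
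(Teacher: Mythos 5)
First, a point of comparison: the paper offers no proof of this statement at all. It is quoted as a classical theorem with references to \cite{fek}, \cite{feksze}, \cite{sze}, so the only thing to measure your argument against is the literature and the fragments of the Fekete--Szeg\H{o} construction that the paper does reproduce later, namely Theorem \ref{key} and the proof of Theorem \ref{integ}. Your part (1) is correct and is exactly the classical argument: playing $|\disc(p)|\ge 1$ against the monotone convergence of the $n$-th order diameters $d_n(E_\delta)\downarrow \cap(E_\delta)<1$ bounds the degree of any Galois orbit contained in $U$, and bounded degree plus bounded coefficients leaves only finitely many orbits. (This is the same discriminant-versus-diameter computation the paper runs, in integrated form, in its theorem on atoms and in Proposition \ref{comp}.)

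Part (2) is where the content of the theorem lies, and your sketch does not close there. Two concrete problems. First, the volume count: even granting that the coefficient region of monic real degree-$N$ polynomials with all roots in $K$ has volume of order $\cap(K)^{N^2/2}$, large volume does not produce lattice points, because the region is extremely anisotropic --- the $k$-th elementary symmetric function only ranges over an interval of length about $\binom{N}{k}(\diam K)^{k}$, so the region is very thin in the low-order coordinates relative to its total volume --- and you acknowledge you cannot control its shape. Second, the proposed repair cannot work: Rumely's theorem (Theorem \ref{rume}) is an equidistribution statement about a \emph{given} sequence of algebraic integers of small height; it presupposes precisely the existence that part (2) is supposed to establish, and in any case it applies only when $\cap(E)=1$ exactly. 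The construction that actually proves (2) --- and which the paper does spell out elsewhere --- is the lemniscate one: by Hilbert's lemniscate theorem one squeezes a lemniscate $\{z : |p(z)|=R^{\deg p}\}$ with $R>1$ between $E$ and $U$; by Theorem \ref{key} one replaces $p$ by a monic \emph{integer} polynomial $q$ whose $Q$-lemniscate ($1<Q<R$) is still enclosed; and then the roots of the monic integer polynomials $q(t)^{kl}-m^{l\deg q}$, for suitable integers $k,l,m$, form infinitely many distinct complete sets of algebraic integers lying inside $U$, exactly as in the proof of Theorem \ref{integ}. To make your part (2) into a proof you would need to substitute this explicit construction (or genuinely carry out a shape-controlled lattice-point argument) for the appeal to Rumely.
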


Observe, that this result has a very drastic consequence on the spectral measure, supported on sets of small logarithmic capacity.

\begin{proposition} \label{feke1}
Let $\mu \in \Lambda_{\lambda}$ be an integer measure. 
If the capacity of $\supp(\mu)$ is strictly less than $1$, then
the measure $\mu$ is completely atomic.
\end{proposition}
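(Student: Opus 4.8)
The plan is to isolate the continuous part of $\mu$ and show that it cannot survive once the capacity of the support drops below $1$. By Proposition \ref{prop1}, I would write $\mu = c\,\mu_{\mathrm{at}} + (1-c)\,\nu$, where $0 \leq c \leq 1$, the measure $\mu_{\mathrm{at}}$ is an atomic integer measure, and $\nu$ is an integer measure with no atoms. Arguing by contradiction, suppose $\mu$ is \emph{not} completely atomic; then $c<1$ and $\nu$ is a genuine atom-free integer measure. Since $c\,\mu_{\mathrm{at}} \geq 0$, we have $(1-c)\,\nu = \mu - c\,\mu_{\mathrm{at}} \leq \mu$ as positive measures, so $\nu$ is dominated by $\mu/(1-c)$ and therefore $\supp(\nu) \subseteq \supp(\mu) =: E$.

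The decisive input is that an atom-free integer measure has non-positive logarithmic energy. Choosing monic integer polynomials $p_n$ with uniformly bounded roots and $\Delta(p_n) \to \nu$ weakly, Proposition \ref{comp} gives $I(\nu) + \limsup_{n \to \infty} D(\Delta(p_n)) \leq 0$; since $|\disc(p_n)| \geq 1$ for monic integer polynomials we have $D(\Delta(p_n)) \geq 0$, and hence $I(\nu) \leq 0$. This is exactly the argument of Theorem \ref{inequality}, read for the measure $\nu$ rather than for a spectral measure of a self-adjoint operator; the self-adjointness plays no role, only the fact that $\nu$ is an atom-free integer measure.

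Next I would convert this energy bound into a capacity statement. Because $\nu$ is a probability measure on $\supp(\nu)$, the defining variational property of logarithmic capacity yields
$-\log \cap(\supp(\nu)) = \inf\{\,I(\sigma) : \supp(\sigma) \subseteq \supp(\nu)\,\} \leq I(\nu) \leq 0$,
so that $\cap(\supp(\nu)) \geq 1$. Monotonicity of capacity under inclusion, combined with $\supp(\nu) \subseteq E$, then forces $\cap(E) \geq \cap(\supp(\nu)) \geq 1$, contradicting the hypothesis $\cap(E) < 1$. Therefore $c = 1$ and $\mu$ is completely atomic.

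The only genuinely delicate point is the energy inequality $I(\nu) \leq 0$: one must know both that the continuous part $\nu$ is itself an integer measure (supplied by Proposition \ref{prop1}) and that $\limsup_n D(\Delta(p_n)) \geq 0$ along an approximating sequence, for which one may pass to square-free polynomials so that $D(\Delta(p_n)) \geq 0$ follows from $|\disc(p_n)| \geq 1$; everything else is standard potential theory. I note that one could instead argue directly from Fekete-Szeg\"o (Theorem \ref{feke0}): as $E$ is symmetric with $\cap(E) < 1$, a neighborhood of $E$ contains only finitely many complete sets of algebraic integers, hence only finitely many Galois orbits, so the roots of $p_n$ near $E$ are drawn from finitely many orbits and contribute only atoms in the limit. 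The obstacle in that route is controlling the proportion of roots that stray away from $E$, which weak convergence bounds only in the limit; the logarithmic-energy argument sidesteps this difficulty entirely, so I would adopt it as the primary proof.
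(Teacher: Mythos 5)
Your proposal is correct and follows essentially the same route as the paper: decompose $\mu$ via Proposition \ref{prop1}, apply the energy inequality of Theorem \ref{inequality} (whose proof indeed only uses that the continuous part is an atom-free integer measure) to get $I(\nu)\leq 0$, and conclude $\cap(\supp(\mu))\geq\cap(\supp(\nu))\geq 1$ by the variational definition of capacity and monotonicity. The extra care you take about square-free approximants and the aside on the Fekete--Szeg\"o alternative are reasonable but do not change the argument.
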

\begin{proof}
If the measure $\mu$ is not completely atomic, then, by Proposition \ref{prop1}, 
its continuous part $\mu_c$ is an integer measure, which 
is supported on $\supp(\mu_c) \subset \supp(\mu)$. It follows from Theorem \ref{inequality} 
that $I(\mu) \leq 0$ and hence $\cap(\supp(\mu)) \geq \cap(\supp(\mu_c))\geq 1.$ This finishes the proof.
\end{proof}

Theorem \ref{integ} shows the abundance of spectral measures. For the proof, we needed 
the following theorem, which 
is a slight but enlightening modification of a result of Fekete and Szeg\"o. Since its proof can be taken
almost verbatim from their seminal article \cite{feksze}, we attribute this result to them. 
Indeed, it is most likely that this variant is well-known to experts.

\begin{theorem}[Fekete-Szeg\"o] \label{key}
Let $p(t) \in \Cz[t]$ be a monic polynomial and the lemniscate domain
$$E=\{z \in \Cz \mid |p(z)| = R^{\deg(p)}\}.$$

For every $Q<R$, there exists an monic integer polynomial $q(t) \in \Zz[t]$, such
that the set $$\{z \in \Cz \mid |q(z)| = Q^{\deg(q)}\}$$ is enclosed by $E$.
\end{theorem}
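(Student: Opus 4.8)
The plan is to prove this by approximating the given lemniscate $E = \{z \mid |p(z)| = R^{\deg(p)}\}$ with an algebraic-integer lemniscate. The key idea, following Fekete--Szeg\"o, is that the Green's function of the exterior of a lemniscate $\{|p(z)| = R^{\deg p}\}$ with pole at infinity is given explicitly by
$$
g(z) = \frac{1}{\deg(p)}\bigl(\log|p(z)| - \deg(p)\log R\bigr) = \frac{1}{\deg(p)}\log\frac{|p(z)|}{R^{\deg(p)}},
$$
so that lemniscates of $|p|$ are precisely the level curves of $g$, and the condition $Q < R$ says we want a slightly smaller level set. The logarithmic capacity of $E$ is $R$, and the transfinite-diameter characterization of capacity gives us rational approximations by finite point configurations. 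First I would set up this Green's-function dictionary and record that a monic polynomial $q$ of degree $m$ produces a lemniscate enclosed by $E$ precisely when $|q(z)| \le Q^{m}$ whenever $|p(z)| = R^{\deg p}$, equivalently $\tfrac{1}{m}\log|q(z)| \le \log Q$ on $E$.

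Next I would invoke the transfinite-diameter / Fekete-point machinery. Because $\cap(E) = R$ and $Q < R$, one can choose a large degree $m$ and Fekete points $\zeta_1,\dots,\zeta_m \in E$ whose associated monic polynomial $\prod_j (t - \zeta_j)$ has sup-norm on $E$ close to $R^{m}$; more precisely the $m$-th root of the Chebyshev norm converges to $\cap(E) = R > Q$. The Fekete--Szeg\"o construction then rounds the coefficients of a suitable such polynomial to nearby integers to obtain a \emph{monic integer} polynomial $q \in \Zz[t]$. The monicity is preserved automatically since rounding affects only the lower coefficients, and the rounding perturbation to $|q(z)|$ on the compact set $E$ can be made uniformly small by working with large degree $m$; choosing $m$ large and exploiting the gap between $Q$ and $R$ absorbs the rounding error so that $\tfrac{1}{m}\log|q(z)| \le \log Q$ continues to hold throughout $E$.

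The geometric conclusion follows from the maximum principle. Having arranged $|q(z)| \le Q^{m}$ on all of $E$, I would argue that the lemniscate $\{|q(z)| = Q^{m}\}$ lies inside the region bounded by $E$: the function $\tfrac{1}{m}\log|q(z)|$ is subharmonic away from the zeros of $q$, bounded above by $\log Q < \log R$ on $E$, and its growth at infinity matches $\log|z|$; comparing it with the Green's function $g$ of the exterior of $E$ (which vanishes on $E$ and equals $\log|z| + O(1)$ at infinity) shows that $\{|q| \le Q^m\}$ cannot escape the interior of $E$. Hence the new lemniscate is enclosed by $E$, as required.

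The main obstacle will be making the rounding step quantitative: I must guarantee that after replacing the Fekete polynomial's coefficients by integers, the resulting $|q(z)|$ stays below $Q^{m}$ on \emph{all} of $E$ simultaneously, not merely at the Fekete nodes. This is exactly where the strict inequality $Q < R$ is essential---it provides the multiplicative slack $\log(R/Q) > 0$ per unit degree against which the rounding error, which grows only polynomially in $m$ after taking $m$-th roots, is dominated for large $m$. Controlling this uniform error is the crux, and it is precisely the estimate that Fekete and Szeg\"o carried out in \cite{feksze}, which is why the argument transfers almost verbatim.
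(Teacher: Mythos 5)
First, a point of reference: the paper does not actually prove this statement --- it explicitly defers to Fekete and Szeg\"o's article \cite{feksze} --- so there is no in-paper argument to compare yours against. Judged on its own terms, your proposal has a fatal sign error at the very first step. You translate ``the lemniscate $\{|q|=Q^{m}\}$ is enclosed by $E$'' into the condition $|q(z)|\le Q^{m}$ for all $z\in E$, and you then aim to construct a monic integer $q$ meeting this bound. That condition is the wrong way around, and moreover it is unachievable: by the very capacity theory you invoke, \emph{every} monic polynomial $q$ of degree $m$ satisfies $\max_{z\in E}|q(z)|\ge \cap(E)^{m}=R^{m}>Q^{m}$ (the minimal sup-norm of monic polynomials of degree $m$ on a compact set is bounded below by the $m$-th power of its capacity; apply the maximum principle to the subharmonic function $\tfrac1m\log|q|-G_E-\log R$ on the exterior of $E$, which tends to $-\log R<\tfrac1m\log|q(z)|-0-\log R$ at infinity). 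Note also that if you did have $|q|\le Q^{m}$ on $E$, the sublevel set $\{|q|\le Q^{m}\}$ would \emph{contain} $E$, so the $q$-lemniscate would enclose $E$ rather than be enclosed by it.

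The correct dictionary is the opposite inequality: $\{|q|\le Q^{m}\}\subseteq\{|p|\le R^{\deg p}\}$ holds precisely when all zeros of $q$ lie in the filled lemniscate of $p$ and $|q(z)|\ge Q^{m}$ on $E$; then the minimum principle for the harmonic function $\tfrac1m\log|q|-G_E$ on the exterior forces $|q|>Q^{m}$ off the filled lemniscate. So the task is to make the monic integer polynomial $q$ uniformly \emph{large} on $E$, with $\min_{E}|q|^{1/m}$ pushed up toward the ceiling $R$ from below, while keeping its roots inside $E$. This is what Fekete and Szeg\"o actually do: starting from $p(t)^{k}$, whose modulus is exactly $R^{k\deg p}$ on $E$, they correct the lower-order coefficients to integers using a basis of polynomials of degrees $j<k\deg p$ with sup-norm on $E$ close to $R^{j}$, so the total perturbation is of order $R^{k\deg p-1}/(R-1)$ and is absorbed by the gap $Q<R$ once $k$ is large (this is also where a hypothesis of the form $R\geq 1$, implicit in the paper's application, is really needed). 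Your ``slack'' heuristic is the right instinct, but it must be applied to a lower bound for $|q|$ on $E$, not an upper bound; as written, the construction you propose cannot produce any polynomial meeting your stated criterion.
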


The main consequence in the language of integer measures is now the following:

\begin{theorem} \label{integ}
Let $E \subset \Cz$ be a symmetric sub-set and of the complex plane and 
denote by $\mu_E$ the equilibrium measure. If $\cap(E) \geq 1$,
then $\mu_E$ is an integer measure.
\end{theorem}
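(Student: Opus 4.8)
The plan is to show that the equilibrium measure $\mu_E$ can be approximated weakly by measures of the form $\Delta(q_n)$ for monic integer polynomials $q_n \in \Zz[t]$ whose roots stay in a bounded region. The natural bridge is the theory of lemniscates: the equilibrium measure of a set of capacity $\geq 1$ should be realizable as a limit of equilibrium measures of lemniscate domains, and Theorem \ref{key} lets us inscribe integer lemniscates inside these domains. So first I would reduce to the case where $E$ is itself a lemniscate domain, using the fact that for any compact symmetric $E$ with $\cap(E) \geq 1$ one can find a decreasing sequence of lemniscate domains $E_n = \{z \mid |p_n(z)| \leq R_n^{\deg p_n}\}$ shrinking to $E$, with $p_n$ monic (and, by symmetry of $E$, taken with real or conjugate-paired coefficients) and $\cap(E_n) \to \cap(E)$. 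This is a standard Hilbert-lemniscate type approximation; I would cite \cite{pot} for the existence of such an exhaustion and for the fact that the equilibrium measures $\mu_{E_n}$ converge weakly to $\mu_E$.

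Next I would use the structure of the equilibrium measure of a lemniscate. For $E = \{|p(z)| \leq R^{\deg p}\}$ with $\cap(E) = R \geq 1$, the equilibrium measure $\mu_E$ is the pushforward under $p$ of the normalized arclength/equilibrium measure of the disk $\overline B(0,R^{\deg p})$; equivalently $\mu_E = \frac{1}{\deg p} p^* (\text{Haar on } \{|w| = R^{\deg p}\})$, which one sees from the explicit Green's function $g_E(z) = \frac{1}{\deg p}\log^+ \frac{|p(z)|}{R^{\deg p}}$ recalled in Section \ref{green}. The key observation is that if $q$ is a monic integer polynomial of large degree whose lemniscate $\{|q(z)| = Q^{\deg q}\}$ is enclosed by $E$ (as furnished by Theorem \ref{key} for any $Q < R$), then $\Delta(q)$ — the uniform measure on the roots of $q$ — is close to the equilibrium measure of its own lemniscate, and the latter approximates $\mu_E$ as $Q \uparrow R$ and the degree grows. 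More precisely, the roots of $q$ are exactly the zeros enclosed by the lemniscate $\{|q(z)| = Q^{\deg q}\}$, and for a high-degree polynomial the normalized zero-counting measure $\Delta(q)$ converges weakly to the equilibrium measure of the region it bounds, so I would extract a sequence $q_n$ with $\Delta(q_n) \to \mu_E$ weakly and all roots contained in a fixed disk $B(0,\lambda)$ (possible since every inscribed lemniscate lies inside $E \subset B(0,\lambda)$).

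The main obstacle I anticipate is the last convergence step: establishing that $\Delta(q_n) \to \mu_E$ rather than merely that the roots lie in the right region. Theorem \ref{key} guarantees that an integer lemniscate sits inside $E$, but it does not by itself control how the mass of $\Delta(q_n)$ distributes — one must argue that as $Q \to R$ and $\deg q_n \to \infty$ the zero distribution of $q_n$ fills out $\mu_E$ and does not concentrate. I would handle this by a potential-theoretic squeeze: the logarithmic potential of $\Delta(q_n)$ is controlled above and below by the Green's functions of the enclosing and enclosed lemniscates (whose capacities both tend to $\cap(E)$), forcing the potentials — and hence the measures, via the uniqueness of the equilibrium measure and the continuity properties from Corollary \ref{disccont} and the upper semicontinuity of $\log\det$ established earlier — to converge to those of $\mu_E$. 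Once weak convergence with uniformly bounded support is in hand, Definition \ref{inte} is satisfied and $\mu_E$ is an integer measure, completing the proof.
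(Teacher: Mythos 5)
Your overall architecture matches the paper's: reduce to lemniscate domains via a Hilbert--Fekete--Szeg\H{o} exhaustion, then use Theorem \ref{key} to inscribe integer lemniscates and pass to a weak limit. But there is a genuine gap exactly at the step you flag as the main obstacle, and the squeeze you propose cannot close it. Knowing that all roots of $q_n$ lie inside a region whose capacity tends to $\cap(E)$ does \emph{not} force $\Delta(q_n)\to\mu_E$ when $\cap(E)>1$: take $E=\overline{B}(0,2)$, of capacity $2$, and $q_n(t)=t^n-1$; all roots lie in $E$, yet $\Delta(q_n)$ converges to Haar measure on the unit circle, not to $\mu_E$. The discriminant/potential bounds only forbid concentration on sets of capacity less than one; they do not pin down the limit distribution inside a set of larger capacity. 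Likewise your heuristic that ``the normalized zero-counting measure of a high-degree polynomial converges to the equilibrium measure of the region it bounds'' is false for a single polynomial: the roots of $q$ are the preimages of $0$, not equidistributed preimages of the level set (compare $q(t)=t^n$, whose lemniscate carries Haar measure on a circle while $\Delta(q)=\delta_0$). So the single polynomial $q$ furnished by Theorem \ref{key} is not, by itself, the right approximant.

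The missing idea, which is how the paper proceeds, is to use $q$ only to define a lemniscate and then to manufacture a \emph{new} sequence of monic integer polynomials whose roots lie \emph{on} that lemniscate and equidistribute there. Choose $Q=m^{1/k}$ from the dense set $\{m^{1/k}\mid m,k\in\Nz\}$, so that the lemniscate is $\{z\mid |q(z)|^{k}=m^{\deg q}\}$, and consider
$$m_l(t)=q(t)^{kl}-m^{l\deg q},$$
which is monic with integer coefficients; its roots are the $q$-preimages of the $l$-th roots of $m^{l\deg q}$ on the circle $|w|=m^{\deg q/k\cdot k}$, and by the classical Fekete--Szeg\H{o} results $\Delta(m_l)$ converges weakly to the equilibrium measure of the lemniscate as $l\to\infty$. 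This shows each inscribed integer lemniscate carries an \emph{integer} equilibrium measure, and only then does one take $Q\uparrow R$ and the exhaustion limit, identifying the weak limit with $\mu_E$ by the energy/uniqueness argument you correctly describe. Without the integrality-preserving family $m_l$ (which is why $Q$ must be taken of the special form $m^{1/k}$), your construction produces measures supported in the right place but with no control on their limit.
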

\begin{proof}The proof is a reduction procedure, whose key step is Theorem \ref{key}.

First of all, by Theorem $I$ in \cite{feksze}. There is a sequence of lemniscate domains 
$E_n = \{z \mid |p_n(z)|=R^{\deg(p_n)} \}$, 
such that 
\begin{enumerate}
\item $E_{n+1}$ encloses $E_n$,
\item as $n \to \infty$, the logarithmic capacity of the lemniscate 
domains $\gamma(E_n) = R$ converges to $\gamma(E)$, and
\item the curves approach $E$ in the Hausdorff distance.
\end{enumerate}
We consider the sequence of probability measures $\mu_{E_n}$, that arise as equilibrium measures on the sets $E_n$. 
This sequence, by the Theorem 
of Banach-Alaoglu, has a weak-$*$-convergent subsequence, which converges to a measure $\mu$. Clearly, the measure $\mu$ is supported
on $E$ and $$I(\mu) \geq \limsup_{n \to \infty} I(\mu_{E_n}) = -\log \gamma(E).$$ 
Hence, $I(\mu) = - \log \gamma(E)$ and
$\mu$ has to be the equilibrium measure on $E$.
To finish the proof, we need to show that an equilibrium measure on a lemniscate domain is an integer measure.

We now employ Theorem \ref{key}. Recall, for $Q < R$ be arbitrary, Theorem \ref{key} yields a monic
integer polynomial $q \in \Zz[t]$, such that $E_{R,p}=\{z \mid |p_n(z)|=R^{\deg(p_n)} \}$
encloses $E_{Q,q} = \{z \mid |q(z)|=Q^{\deg(q)} \}$.
 
The set of real numbers $Z=\{m^{1/k} \mid m,k \in \Nz\}$ is dense in $\Rz_{\geq 0}$. Hence, we find
$Q_n \in Z$, such that $Q_n < R$ and $|R - Q_n|< 1/n$. Denote by $q_n$ the monic integer polynomial
that corresponds to $Q_n$. Observe, 
the equilibrium measure on $E_{Q_n,q_n}$ is an integer measure. 
Indeed, if $Q_n = {m_n}^{\deg(q_n)/k_n}$, then $$E_{Q_n,q_n} = \{ z \mid |q(z)|^{k_n} = m_n^{\deg(q_n)} \}$$
supports all the roots of the monic and integer polynomials 
$m_l(t) = q(t)^{k_n l} - m_n^ {l \deg(q_n)}$. Moreover, it follows from well-known results in \cite{feksze}, that 
$\Delta(m_l) \to \mu_{E_{Q_n,q_n}}$ weakly.

Again, applying the Theorem of Banach-Alaoglu, the sequence of probability measure $\mu_{E_{Q_n,q_n}}$
has a weak limit point $\mu'$, which is an integer measure. Denote the support of $\mu'$ by $E'$.

We conclude from the construction
that the set $E'$, which is enclosed by $$\left\{z \mid |p_n(z)|=R^{\deg(p_n)} \right\}.$$
Moreover, $$I(\mu') \geq 
\limsup_{n \to \infty} \left( - \log \gamma\left(\left\{z \mid |q_n(z)|=Q_n^{\deg(q_n)} \right\}\right) \right)
= \lim_{n \to \infty} Q_n = R.$$ 
Hence, we have constructed an integer measure $\mu'$ whose support is enclosed by $E$, and
such that $I(\mu') = R=-\log \gamma(E)$. Since the logarithmic capacity of $E$ and the set enclosed by $E$ agree, 
it follows that $\mu'$ is the equilibrium measure on $E=\{z \mid |p_n(z)|=R^{\deg(p_n)} \}$.
This finishes the proof.
\end{proof}

\subsection{Green function's and Rumely's Theorem} \label{green}

We seek a partial converse for Theorem \ref{integ}.
Let $E \subset \Cz$ be a compact subset and assume that $E^c$ is connected. We can define the Green function
$G_E\colon \Cz \to \Rz$ which is uniquely determined by the following properties:

\begin{enumerate}
\item[(i)] it is continuous and non-negative on $\Cz$,
\item[(ii)] it vanishes on $E$ and is harmonic on $E^c$,
\item[(iii)] $G_E(z) - \log|z |$ is bounded near $\infty$.
\end{enumerate}

It is well-known that $\lim_{z \to \infty} G_E(z) - \log|z| = - \log \cap(E)$, see \cite{pot}.

\begin{example}
Let $p(t) \in \Cz[t]$ be a polynomial of degree $n$. The set $$E=\{z \in \Cz \mid |p(z)| \leq R\}$$ is compact and
$$G_E(z) = \begin{cases} 0 & z \in E \\ n^ {-1} ( \log |p(z)| - \log R ) & z \in E^ c \end{cases},$$
and one concludes that $\cap(E) = R^ {1/n}$.
\end{example}

Let us now define the \emph{logarithmic Weil height} with respect to $E$. Let $p$ be a polynomial with roots $\alpha_1,\dots,\alpha_n$.
We set: $$h_E(p) = \frac1n \sum_{i=1}^ n G_E(\alpha_i) = \int_{\Cz} G_E(z) d\Delta(p).$$
\begin{example}
If $E = B(0,1)$, then $G_E(z) = \max\{0,\log |z| \}$ and $h_E(p) = \frac1n \log M(p)$ is the renormalized Mahler
measure of the polynomial $p$.
\end{example}
The main result of R.\ Rumely in \cite{rumely}, which generalizes a seminal equidistribution result of Y.\ Bilu, see \cite{bilu}, is the following:

\begin{theorem}[Rumely] \label{rume}
Let $E \subset \Cz$ be a compact set which is symmetric with respect to complex conjugation. Assume that
the logarithmic capacity satisfies $\cap(E)=1$.
Let $\alpha_n$ be a sequence of algebraic integers with $\deg(\alpha_n) \to \infty$ .
Denote by $p_n$ the minimal polynomial of $\alpha_n$. If $h_E(p_n)\to 0$, then the 
measures $\Delta(p_n)$ converge weakly to the equilibrium distribution $\mu_E$ on $E$.
\end{theorem}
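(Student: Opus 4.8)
The plan is to run the classical potential-theoretic equidistribution argument, in the spirit of Y.\ Bilu, feeding in two ingredients of different nature: the arithmetic fact that the discriminant of a minimal polynomial is a non-zero integer, and the analytic hypotheses $h_E(p_n)\to 0$ and $\cap(E)=1$. Recall that $\cap(E)=1$ means $-\log\cap(E)=0$, so that $I(\mu)\geq 0$ for every probability measure $\mu$ supported on $E$, with $\mu_E$ the unique such measure attaining $I(\mu_E)=0$.

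First I would establish tightness and reduce to a single limit. Since $G_E$ is continuous, non-negative, and $G_E(z)-\log|z|$ is bounded near $\infty$, we have $\inf_{|z|\geq R}G_E(z)\to\infty$ as $R\to\infty$, so Markov's inequality gives
\[
\Delta(p_n)(\{|z|\geq R\}) \leq \frac{\int G_E\,d\Delta(p_n)}{\inf_{|z|\geq R}G_E(z)} = \frac{h_E(p_n)}{\inf_{|z|\geq R}G_E(z)}.
\]
As $h_E(p_n)$ is bounded and the denominator tends to $\infty$, the family $\{\Delta(p_n)\}$ is tight, and every subsequence has a weak-$*$ convergent sub-subsequence whose limit $\nu$ is again a probability measure. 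It then suffices to show $\nu=\mu_E$ for every such limit point; the whole sequence converges to $\mu_E$ afterwards.

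Next I would pin down $\supp(\nu)$ and extract a lower energy bound. Applying the variant of Fatou's Lemma established above to the continuous non-negative function $G_E$, weak convergence yields $\int G_E\,d\nu\leq\liminf_n\int G_E\,d\Delta(p_n)=\lim_n h_E(p_n)=0$, hence $\int G_E\,d\nu=0$. Since $G_E$ vanishes on $E$ and is strictly positive on $E^c$ by the maximum principle, this forces $\supp(\nu)\subseteq E$; thus $\nu$ is admissible in the variational definition of capacity and $I(\nu)\geq -\log\cap(E)=0$.

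The crux is the reverse inequality $I(\nu)\leq 0$, where the arithmetic enters. As $p_n$ is the minimal polynomial of an algebraic integer it is monic, integral and irreducible, so $\disc(p_n)$ is a non-zero integer and, as recorded in Section~\ref{intro2}, $\prod_{\alpha_i\neq\alpha_j}|\alpha_i-\alpha_j|=|\disc(p_n)|\geq 1$; equivalently the off-diagonal sum $\frac1{n^2}\sum_{i\neq j}(-\log|\alpha_i-\alpha_j|)$ is non-positive. The obstacle is that $\Delta(p_n)$ is atomic, so its true self-energy is $+\infty$ and one cannot pass to the limit directly. I would circumvent this by truncating the kernel: for fixed $M$ the function $\min\{M,-\log|z-w|\}$ is bounded and continuous, so $\Delta(p_n)\times\Delta(p_n)\to\nu\times\nu$ weakly gives
\[
\iint\min\{M,-\log|z-w|\}\,d\nu\,d\nu = \lim_n\iint\min\{M,-\log|z-w|\}\,d\Delta(p_n)\,d\Delta(p_n).
\]
Splitting the right-hand integral into its diagonal contribution $M/n\to 0$ and its off-diagonal part, which is bounded above by $\frac1{n^2}\sum_{i\neq j}(-\log|\alpha_i-\alpha_j|)\leq 0$, shows the left-hand side is $\leq 0$ for every $M$; letting $M\to\infty$ and using monotone convergence (legitimate since $\nu$ has compact support, so the kernel is bounded below) gives $I(\nu)\leq 0$. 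Together with $I(\nu)\geq 0$ this forces $I(\nu)=-\log\cap(E)$, and since $\supp(\nu)\subseteq E$ and the equilibrium measure is the unique energy minimiser on $E$, we conclude $\nu=\mu_E$. The essential difficulty is precisely this last step: reconciling the infinite self-energy of the atomic measures $\Delta(p_n)$ with the finite-energy limit, which is exactly where the truncation must be married to the integrality bound $|\disc(p_n)|\geq 1$; and it is the normalisation $\cap(E)=1$ that makes the discriminant bound $I(\nu)\leq 0$ meet the equilibrium threshold $-\log\cap(E)=0$.
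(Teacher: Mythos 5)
Your argument is correct and follows essentially the same route as the paper: the paper likewise deduces $\supp(\mu)\subset E$ from $h_E(p_n)\to 0$, gets $I(\mu)\le 0$ from the integrality bound $|\disc(p_n)|\ge 1$ (there delegated to Proposition~\ref{comp} and the upper semicontinuity of $\log\disc$, which is exactly your kernel-truncation step), and concludes $I(\mu)=0=-\log\cap(E)$ and $\mu=\mu_E$ by uniqueness of the equilibrium measure. You merely inline the truncation/Fatou argument and supply the tightness verification that the paper leaves implicit.
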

\begin{proof}
Obviously, $h_E(p_n) \to 0$ is equivalent to $\supp(\mu) \subset E$. Since $I(\mu) \leq 0$ by Proposition \ref{comp}, 
and $\cap(E) =1$, we conclude $I(\mu)=0$. Thus, $\mu=\mu_E$ by uniqueness of the equilibrium measure.
\end{proof}

\begin{corollary}
Let $\mu$ be an integer measure without atoms. If $\cap(\supp(\mu)) =1$, then $\mu=\mu_E$.
\end{corollary}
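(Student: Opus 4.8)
The plan is to sandwich the logarithmic energy $I(\mu)$ between two matching bounds and then invoke uniqueness of the energy minimizer. Write $E=\supp(\mu)$. First I would record the structural facts: since $\mu$ is an integer measure it is symmetric and lies in some $\Lambda_{\lambda}$, so $E$ is a compact subset of $\overline{B}(0,\lambda)$; by hypothesis $\cap(E)=1$, hence $-\log\cap(E)=0$, and the equilibrium measure $\mu_E$ exists (as $\cap(E)>0$) with $I(\mu_E)=0$.

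The upper bound comes from the integer structure. Because $\mu$ is an atom-free integer measure, Theorem \ref{inequality}---equivalently Proposition \ref{comp} together with the observation that $\exp D(\Delta(p_n))=\prod_{\alpha_i\neq\alpha_j}|\alpha_i-\alpha_j|\geq 1$ for monic integer polynomials---gives $I(\mu)\leq 0$.

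The lower bound is purely potential-theoretic. Since $\mu$ is a probability measure supported on $E$, the variational definition $-\log\cap(E)=\inf\{I(\nu)\mid \nu\text{ supported on }E\}$ yields $I(\mu)\geq -\log\cap(E)=0$. Combining the two bounds forces $I(\mu)=0=I(\mu_E)$, so $\mu$ attains the infimum defining the capacity. By the uniqueness clause for the equilibrium measure (valid because $\cap(E)>0$; see \cite{pot}), the minimizer is unique, and therefore $\mu=\mu_E$.

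I do not expect a genuine obstacle: the whole content is the interplay of these two inequalities, both already available in the excerpt, and the corollary is essentially the atom-free specialization of the argument already used to prove Rumely's Theorem \ref{rume}. The only point deserving a line of care is verifying that $E=\supp(\mu)$ is compact of positive capacity so that $\mu_E$ exists and is unique; this is immediate from $\mu\in\Lambda_{\lambda}$ and $\cap(E)=1>0$.
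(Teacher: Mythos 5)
Your proof is correct, but it takes a more direct route than the paper. The paper first reduces to the case of an \emph{irreducible} integer measure (a weak limit of $\Delta(p_n)$ with $p_n$ irreducible), observes that $\supp(\mu)\subset E$ forces the Weil heights $h_E(p_n)=\int G_E\,d\Delta(p_n)$ to tend to zero, and then invokes Rumely's Theorem \ref{rume}; the general case is handled by the claim that every integer measure on $E$ lies in the closed convex hull of irreducible ones. You instead apply the energy sandwich directly to $\mu$ itself: $I(\mu)\le 0$ from Proposition \ref{comp} together with $D(\Delta(p_n))\ge 0$ (which is the real content of Theorem \ref{inequality} and needs no self-adjointness), and $I(\mu)\ge -\log\cap(\supp(\mu))=0$ from the variational definition of capacity, whence $\mu$ is the unique energy minimizer $\mu_E$. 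This is essentially the argument the paper uses to \emph{prove} Rumely's Theorem \ref{rume}, so you have in effect unrolled the paper's chain of reductions. What your version buys: it avoids the Green's function and Weil-height machinery, it does not need the reduction to irreducible measures (and hence does not rely on the somewhat delicate convex-hull claim at the end of the paper's proof), and it makes transparent that the only inputs are the discriminant bound and uniqueness of the equilibrium measure. What the paper's formulation buys is the explicit connection to Rumely's equidistribution theorem, which is one of the advertised themes of the article. Your one point of care --- that $E=\supp(\mu)$ is compact of capacity $1>0$ so that $\mu_E$ exists and is unique --- is indeed all that needs checking, and it follows from $\mu\in\Lambda_\lambda$ as you say.
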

\begin{proof} Let us first assume that $\mu$ is irreducible.
Any irreducible integer measure is the weak limit of measures $\Delta(p_n)$, with $p_n \in \Zz[t]$, 
irreducible and monic. If the limiting measure is supported on a compact set $E \subset \Cz$, then
$$h_E(p_n) = \int_{\Cz} G_E(z) d\Delta(p) \to \int_{\Cz} G_E(z) d\mu =0.$$
Hence, Rumely's Theorem applies and $\mu = \mu_E$. Now, since any integer 
measure on $E$ lies in the closed convex hull of irreducible integer measures on $E$,
the result immediately extends to non-irreducible measures.
\end{proof}
The next corollary is a generalization of results, that have been obtained in \cite{thom2} in case of
the unit circle and the interval $[-2,2]$. Note that the proof is simplified.
\begin{corollary}
Let $\mu$ be an integer measure without atoms. If $\cap(\supp(\mu)) =1$, then there exists a sequence
$\lambda_0,\lambda_1,\lambda_2, \dots$ of non-negative real numbers, and a sequence of monic irreducible 
integer polynomials $p_1,p_2,\dots$, such that 
$$\sum_{i=0}^{\infty} \lambda_i =1,\quad \mbox{and} \quad
\mu=\lambda_0 \mu_E + \sum_{i=1}^{\infty} \lambda_i \Delta(p_i).$$
\end{corollary}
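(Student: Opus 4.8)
The plan is to split $\mu$ into its continuous and atomic parts and to recognise each piece separately: the continuous part will turn out to be the equilibrium measure by the preceding corollary, while the atomic part will break up into the measures $\Delta(p_i)$ attached to the Galois orbits of its atoms. Concretely, I would first apply Proposition \ref{prop1} to write $\mu = \lambda_0\,\mu_c + (1-\lambda_0)\,\mu_a$, where $\mu_c$ is an atomless integer measure, $\mu_a$ is a purely atomic integer measure, and $\lambda_0 \in [0,1]$ is the mass of the continuous part. It then suffices to describe $\mu_c$ and $\mu_a$ individually and to reassemble the weights.

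For the continuous part, when $\lambda_0 > 0$, I would check that its support again has logarithmic capacity one, exactly as in Proposition \ref{feke1}. Since $\lambda_0\mu_c \leq \mu$, we have $\supp(\mu_c) \subseteq \supp(\mu)$, so by monotonicity $\cap(\supp(\mu_c)) \leq \cap(\supp(\mu)) = 1$; on the other hand $\mu_c$ is an atomless integer measure, so Theorem \ref{inequality} gives $I(\mu_c) \leq 0$, and the definition of logarithmic capacity forces $\cap(\supp(\mu_c)) \geq 1$. Hence $\cap(\supp(\mu_c)) = 1$, the preceding corollary applies, and it yields $\mu_c = \mu_E$ with $E = \supp(\mu_c)$.

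For the atomic part I would use the structure theorem for the atoms of an integer measure: they sit at algebraic integers, are permuted by $\gal$, and conjugate atoms carry equal mass. Since $\mu_a$ is a probability measure, no Galois orbit of atoms can be infinite, because infinitely many conjugates of a common positive weight would exhaust more than the available mass; thus the atoms fall into at most countably many \emph{finite} orbits $O_1, O_2, \dots$. Each such finite, Galois-stable set of algebraic integers is precisely the full root set of a monic irreducible integer polynomial $p_i$, and the uniform probability measure on it is exactly $\Delta(p_i)$. Writing $c_i$ for the $\mu_a$-mass of $O_i$, this gives $\mu_a = \sum_{i \geq 1} c_i\,\Delta(p_i)$ with $c_i \geq 0$ and $\sum_i c_i = 1$. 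Setting $\lambda_i = (1-\lambda_0)c_i$ and combining with the previous step produces $\mu = \lambda_0\mu_E + \sum_{i \geq 1} \lambda_i \Delta(p_i)$, with $\lambda_0 + \sum_i \lambda_i = \lambda_0 + (1-\lambda_0) = 1$, as required.

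The hard part will be the passage from ``equal-weight Galois orbits'' to the clean series $\sum_i c_i \Delta(p_i)$: one must argue that every orbit carrying atomic mass is finite and coincides with the complete set of conjugates of a monic irreducible integer polynomial, and that the at most countably many orbits assemble into a weakly convergent series of total mass one. The finiteness is forced by the probability normalisation, the identification of an orbit with the root set of its minimal polynomial is the defining feature of a complete set of algebraic integers, and convergence of the series is automatic since its partial masses are bounded by $1$. Finally, under the literal hypothesis that $\mu$ has no atoms the atomic part is absent, so $\lambda_0 = 1$ and the statement collapses to the preceding corollary.
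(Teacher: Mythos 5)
Your argument is correct and is precisely the intended one: the paper states this corollary without any proof, as an immediate consequence of Proposition \ref{prop1}, of the preceding corollary applied to the continuous part, and of the structure theorem for the atoms of an integer measure (algebraic integers, complete Galois orbits, equal weights along an orbit), which is exactly the route you take, including the observation that each orbit carrying positive mass must be finite and is then the root set of a monic irreducible integer polynomial. You are also right that, read literally, the hypothesis ``without atoms'' makes the statement collapse to the preceding corollary with $\lambda_0=1$ and all other $\lambda_i=0$; the decomposition is only meaningful for a general integer measure with $\cap(\supp(\mu))=1$, which is what your proof actually establishes.
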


\subsection{Decomposition of integer measures}

\begin{theorem} \label{decomp}
We denote by $P_\lambda$ the sub-set of probability measures in $\Lambda_{\lambda}$ and
$X_{\lambda}$ its set of extreme points.
Let $\lambda\geq 1$.
\begin{enumerate}
\item A measure in $X_{\lambda}$ is completely atomic, if and only if it equal to $\Delta(p)$ with $p \in \Zz[t]$,
an irreducible monic polynomial with all roots in $B(0,\lambda)$. 
Otherwise, it does not contain any atoms.
\item The set $X_{\lambda}$ is contained  in the weak closure of
$$\{\Delta(p) \mid p \in \Zz[t] \ \mbox{irreducible and monic}\}.$$
\item For every integer measure $\mu \in P_{\lambda}$ there exists a unique probability
measure $\nu \in P(X_{\lambda})$, such that
$$\mu = \int_{X_C} \mu_x \ d \nu(x).$$
\end{enumerate}
\end{theorem}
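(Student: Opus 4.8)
The plan is to treat the three assertions in turn, using Proposition \ref{prop1} and the structure theorem for the atoms of an integer measure for part (1), Milman's theorem for part (2), and Choquet theory together with a simplex argument for part (3). Throughout I use that the elements of $P_\lambda$ are exactly the integer probability measures supported in $\overline{B}(0,\lambda)$.

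For part (1), let $x \in X_\lambda$ be extreme. By Proposition \ref{prop1} I may write $x = t\mu_a + (1-t)\mu_c$ with $\mu_a$ an atomic and $\mu_c$ an atomless integer measure, both in $P_\lambda$. If $0<t<1$, extremality would force $\mu_a = \mu_c = x$, which is impossible since one is atomic and the other atomless; hence $t \in \{0,1\}$, giving the stated dichotomy. If $x$ is completely atomic, then by the structure theorem its atoms are algebraic integers, the atom set is invariant under $\gal$, and Galois-conjugate atoms carry equal mass; thus $x = \sum_{O} c_O \Delta(p_O)$, indexed by the distinct Galois orbits $O$ occurring among the atoms, where $p_O$ is the irreducible monic minimal polynomial of $O$. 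Extremality forces a single orbit with $c_O = 1$, so $x = \Delta(p)$ with $p$ irreducible, monic, and with roots in $B(0,\lambda)$. Conversely, such a $\Delta(p)$ is extreme: any $\Delta(p) = \tfrac12(\mu_1+\mu_2)$ with $\mu_i \in P_\lambda$ forces each $\mu_i$ to be supported on the finite root set, a single Galois orbit, whence $\mu_i = \Delta(p)$ by the equal-mass property.

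For part (2), I would use that factorization turns $\Delta$ into a convex combination: writing a monic $p \in \Zz[t]$ as $p = \prod_i q_i^{k_i}$ with $q_i$ irreducible gives $\Delta(p) = \sum_i \tfrac{k_i \deg(q_i)}{\deg(p)}\,\Delta(q_i)$, so every $\Delta(p)$ lies in the convex hull of $S := \{\Delta(q) \mid q \in \Zz[t]\ \text{irreducible and monic, roots in}\ \overline{B}(0,\lambda)\}$. Since every integer measure is a weak limit of measures $\Delta(p_n)$, it follows that $P_\lambda = \overline{\operatorname{conv}}(S)$, the weak-$*$-closed convex hull. As $P_\lambda$ is weak-$*$-compact, Milman's partial converse to Krein--Milman yields $X_\lambda = \operatorname{ext}(P_\lambda) \subset \overline{S}$, which is exactly the assertion.

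For part (3), existence is immediate from Choquet theory: $P_\lambda$ is a weak-$*$-compact convex subset of $C_\lambda'$, and it is metrizable because $C_\lambda = C(\overline{B}(0,\lambda))$ is separable, so Choquet's theorem furnishes, for each $\mu \in P_\lambda$, a probability measure $\nu$ on the Borel set $X_\lambda$ with barycenter $\mu = \int_{X_\lambda} x\, d\nu(x)$. The real content is the uniqueness of $\nu$, which is equivalent to $P_\lambda$ being a Choquet simplex. I would first reduce to the atomic and atomless pieces: the splitting of $\mu$ into its atomic and continuous parts is canonical, so any representing $\nu$ must itself split into a part carried by the atomic extreme points $\{\Delta(p)\}$ and a part carried by the atomless extreme points; on the atomic side uniqueness is clear, since distinct Galois orbits have disjoint supports and within-orbit weights are forced to be uniform, so the masses $c_O$ are determined by $\mu$. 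The hard part will be uniqueness on the atomless side, i.e.\ the simplex property of the face of atomless measures. I would attempt this either by exhibiting a lattice structure on the cone generated by $P_\lambda$ under the order $\mu \preceq \nu \iff \nu - \mu$ is a positive integer measure (the intrinsic criterion for a simplex), or by a measurable disintegration over the space of irreducible integer measures producing a canonical $\nu$. The genuine obstacle is that the infimum of two integer measures need not stay within the cone when taken merely as measures, so one must bring to bear the arithmetic rigidity of integer measures — Galois invariance of supports together with the capacity constraints of Theorem \ref{inequality} and Rumely's theorem — to control the decomposition.
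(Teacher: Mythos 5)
The paper states Theorem \ref{decomp} without giving any proof at all, so there is nothing of the author's to compare your argument against; I can only assess the proposal on its own terms. Parts (1) and (2) are correct and essentially complete. For (1), the dichotomy via Proposition \ref{prop1} and extremality is right, and the identification of the completely atomic extreme points with the measures $\Delta(p)$, $p$ irreducible and monic, follows as you say from the structure theorem for atoms (atoms at algebraic integers, Galois-invariant atom set, equal mass along orbits); the converse is also fine, though you should say explicitly that an element of $P_\lambda$, being a weak-$*$ limit of integer measures in a metrizable topology, is itself an integer measure, so that the structure theorem really applies to the summands $\mu_i$. For (2), the identity $\Delta(p)=\sum_i \frac{k_i\deg q_i}{\deg p}\Delta(q_i)$ exhibits $P_\lambda$ as the weak-$*$-closed convex hull of the irreducible $\Delta(q)$, and Milman's partial converse to Krein--Milman then gives exactly the stated inclusion; this is clean.

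The genuine gap is the uniqueness assertion in (3), which you acknowledge but do not close. Choquet's existence theorem applies since $P_\lambda$ is a metrizable compact convex subset of $C_\lambda'$, but uniqueness of the representing measure is equivalent to $P_\lambda$ being a Choquet simplex, and nothing you have written establishes this. Your reduction is sound as far as it goes: the atomic/continuous splitting of $\mu$ is canonical, and uniqueness on the atomic face follows from disjointness of Galois orbits together with the equal-mass property. But the simplex property of the atomless face is precisely the hard content, and neither of your two suggested routes (the lattice criterion for the cone generated by $P_\lambda$, or a canonical disintegration over irreducible integer measures) is actually carried out --- you yourself identify the obstruction that the measure-theoretic infimum of two integer measures need not be an integer measure. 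A closed convex subset of the Bauer simplex $P(\overline{B}(0,\lambda))$ need not itself be a simplex, so some genuinely arithmetic input is required here. As it stands, part (3) of your proof is a plan rather than an argument, and since the paper offers no proof either, this step should be regarded as open until that input is supplied.
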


It remains to characterize the extreme points. One natural class of measures that comes to mind, is
the class of equilibrium measures. Let us formulate the following somewhat speculative question:

\begin{question} \label{capint}
Let $\mu \in X_{\lambda}$ be an integer measure and let $E=\supp(\mu)$. 
Is it always true that $\mu = \mu_E$?
\end{question}

\begin{remark}
Note that Theorem \ref{rume} solves Question \ref{capint} affirmatively 
if the capacity of $\supp(\mu)$ equals one. 
Moreover, the conjecture is obviously true whenever the capacity is smaller that one.
However, these cases are very special. Also in view of concavity of $\mu \to I(\mu)$, rather
than convexity, a positive solution seems unlikely. In view of Theorem \ref{decomp}, an 
affirmative answer would be a striking result. In fact,
it would reveal the nature of integer measures in a very nice and understandable way.
\end{remark}

\section{The metric space of integer measures}
\subsection{The work of Petracovici, Petracovici and Zaharescu}

In \cite{za1}, B.\ Petracovici, L.\ Petracovici and A.\ Zaharescu introduced the notion of \textit{unitary positive 
divisors} on the complex plane and defined a metric on the set of those divisors. As one easily sees, a positive unitary divisor
is nothing but an atomic probability measure, where the distance function agrees with the well-known Wasserstein metric on the space of probability measures. Moreover, if the underlying metric space is
bounded, it is well-known that convergence in the Wasserstein metric is equivalent to weak convergence,
see \cite{gs} for a nice survey on the relations between the many ways to metrize spaces of measures. 

We see that every integral measure is a point in the completion of the metric space 
of Galois invariant positive unitary divisors, which was studied in \cite{za1}. 
In \cite{za1}, arbitrary (i.e. not necessarily integer) orbits of the Galois group where studied and some interesting
number theoretic constructions found a nice interpretetion in terms of the Wasserstein metric.

In order to study the space metric space which was introduced in \cite{za1} more directly, it is of importance to decide, whether the induced topology is locally compact, i.e. whether the metric is proper. The following theorem subsection settles this question in full generality. 
\subsection{Spaces of measures and properness}

This subsection is independent of the rest of the paper. The result must have appeared somewhere in the literature. However, since we where unable to locate a reference, we include a short proof.

\mn

Let $(X,d)$ be a metric space. We consider the following space
of Borel probability measures on $X$:
$$P(X) = \left\{\mu \mid \int_X d(x_0,x) d\mu(x) < \infty\right\},$$
where $x_0$ is an arbitrary point in $X$. Using the triangle inequality, one easily sees that the definition
does not depend on the choice of the point $x_0 \in X$. We endow $P(X)$ with the Wasserstein metric $d_W$.
For the definition of the Wasserstein metric, recall:
$$d_W(\mu_1,\mu_2) = \inf\left\{ \int_{X \times X} d(x,y) d\nu(x,y)\mid (\pi_1)_*(\nu) = \mu_1, (\pi_2)_*(\nu) = \mu_2\right\},$$
where $\nu$ is supposed to be a probability measure on $X \times X$.

\begin{theorem}
Let $(X,d)$ be a proper, separable and complete metric space. 
The metric space $(P(X),d_W)$ is proper, separable and complete.
\end{theorem}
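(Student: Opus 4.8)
The plan is to prove the three properties—properness, separability, and completeness—in that order, since properness is the heart of the matter and the other two will follow relatively cheaply (indeed completeness is classical and separability is standard once properness is in hand). Throughout, I would fix a basepoint $x_0 \in X$ and work with the closed balls $\overline{B}(x_0, R)$, which are compact because $X$ is proper. The key structural fact to exploit is that $P(X)$ with $d_W$ metrizes weak convergence \emph{together with} convergence of first moments; on a proper space this lets me translate tightness arguments for the measures into genuine compactness statements.

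**For properness**, I want to show that every closed $d_W$-ball in $P(X)$ is compact. Concretely, I would show that a set of the form $\mathcal{K} = \{\mu \mid d_W(\mu, \mu_0) \le \rho\}$ is sequentially compact. First I would establish that $\mathcal{K}$ is \emph{tight}: the moment bound $\int_X d(x_0, x)\, d\mu(x) \le \int d(x_0,x)\,d\mu_0 + \rho$ (from the triangle inequality applied through an optimal coupling) forces, via Markov's inequality, that $\mu(X \setminus \overline{B}(x_0, R)) \le C/R$ uniformly over $\mu \in \mathcal{K}$; since $\overline{B}(x_0,R)$ is compact, this is exactly Prokhorov tightness. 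By Prokhorov's theorem any sequence in $\mathcal{K}$ has a weakly convergent subsequence $\mu_n \rightharpoonup \mu$. The remaining task is to upgrade weak convergence to $d_W$-convergence, for which I must rule out escape of mass to infinity in first moment; the uniform tail bound again controls $\int_{d(x_0,x) > R} d(x_0,x)\, d\mu_n$ uniformly in $n$, so the truncated moments converge and hence $\int d(x_0, x)\, d\mu_n \to \int d(x_0,x)\, d\mu$, which is precisely the condition ensuring $d_W(\mu_n, \mu) \to 0$. Finally $\mathcal{K}$ is $d_W$-closed, so the limit lies in $\mathcal{K}$.

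**For separability**, I would take a countable dense set $D \subset X$ and consider the countable family of finitely-supported rational-weight measures concentrated on $D$; approximating an arbitrary $\mu \in P(X)$ first by a compactly supported truncation (controlled by the tail bound above) and then by such a finitely-supported measure gives density in $d_W$. **For completeness**, a $d_W$-Cauchy sequence is bounded, hence lives in some compact ball by the properness just proved, so it already has a convergent subsequence; a Cauchy sequence with a convergent subsequence converges, which finishes the argument.

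**The main obstacle** is the passage from weak convergence to Wasserstein convergence in the properness step: weak convergence alone is strictly weaker than $d_W$-convergence on an unbounded space, and the gap is exactly uniform integrability of the distance function $d(x_0, \cdot)$. I expect the real content to lie in proving the uniform first-moment tail estimate over the $d_W$-ball $\mathcal{K}$ and then invoking the characterization that $d_W$-convergence equals weak convergence plus convergence of first moments (which itself relies on $X$ being proper so that $\overline{B}(x_0,R)$ is compact). Everything else is either classical (Prokhorov, the Cauchy-subsequence lemma) or a routine truncation.
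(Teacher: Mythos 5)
Your reduction of properness to ``tightness plus convergence of first moments'' contains a genuine gap, and it sits exactly where you yourself locate the main obstacle. From the uniform bound $\int_X d(x_0,x)\,d\mu(x)\le C$ on a $d_W$-ball $\mathcal{K}$ you correctly deduce, via Markov, the tail bound $\mu(X\setminus\overline{B}(x_0,R))\le C/R$, hence tightness and a weakly convergent subsequence. But the next claim --- that the same bound ``controls $\int_{d(x_0,x)>R}d(x_0,x)\,d\mu_n$ uniformly in $n$'' --- is false: a uniform \emph{first}-moment bound gives uniform smallness of the \emph{mass} in the tail, not of the \emph{first moment} carried by the tail. Uniform integrability of $d(x_0,\cdot)$ would require a uniform bound on some superlinear moment, which membership in a $d_W$-ball does not provide. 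The gap cannot be repaired, because the statement itself fails already for $X=\Rz$: take $\mu_n=(1-\tfrac1n)\delta_0+\tfrac1n\delta_n$. Then $d_W(\mu_n,\delta_0)=1$ for all $n$, so the sequence lies in the closed unit ball around $\delta_0$; it converges weakly to $\delta_0$, so $\delta_0$ is the only candidate for a $d_W$-subsequential limit, yet $d_W(\mu_n,\delta_0)\not\to0$. (In fact $d_W(\mu_n,\mu_m)\ge 1-n/m$ for $m>n$, as one sees by testing against the $1$-Lipschitz function $x\mapsto\min\{x,n\}$, so no subsequence is even Cauchy.) Hence closed balls in $(P(\Rz),d_W)$ are not compact: the Wasserstein space of a noncompact base is never locally compact.

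For what it is worth, the paper's own proof of properness has the same defect, merely hidden elsewhere: after decomposing $\mu_n=\sum_k\lambda_{n,k}\mu_{n,k}$ over annuli, it asserts that a diagonal subsequence achieves $|\lambda_{n_l,k}-\lambda_k|\le\frac{1}{l\cdot 2^k}$ for \emph{all} $k$ simultaneously, which is precisely the uniform-in-$k$ control that a first-moment bound cannot supply (in the example above $|\lambda_{n,n-1}-\lambda_{n-1}|=1/n$, which cannot be forced below $2^{-(n-1)}/l$); the concluding estimate $d_W(\mu_{n_l},\nu)\le 2/l$ then actually evaluates to $1$. Your separability argument is fine, and completeness is true and classical, but note that your derivation of completeness \emph{from} properness collapses together with properness and must be replaced by the standard direct argument. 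What both your argument and the paper's genuinely establish is weak-$*$ precompactness of moment-bounded sets, and the theorem should be weakened accordingly.
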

\begin{proof}
It is well-known that $(P(X),d_W)$ is separable and complete. Let us show that it is indeed proper.

Consider $B_{1} = \left\{\mu\mid \int_X d(x_0,x) d\mu(x) \leq 1 \right\}$
and let $\mu_n$ be a sequence of measures in $B_1$. We want to show that there is a 
convergent sub-sequence $\mu_{n_l}$.
We write:
$$\mu_n = \sum_{i=1}^{\infty} \lambda_{n,k} \mu_{n,k},$$
where $\mu_{n,k}$ is supported in $\overline{B}(x_0,k+1) \setminus B(x_0,k)$.
On bounded domains, the Wasserstein metric coincides with the weak-$*$ topology. Hence,
for each $k$, the sequences $\mu_{n,k}$ have convergent subsequences. Similarly,
the sequences $\lambda_{n,k}$ have convergent sub-sequences.
Using a diagonalization procedure, there
exists an increasing sequence of positive integers $n_l$, such that 
$d_W(\mu_{n_l,k},\nu_k) \leq \frac1{l}$ and 
$|\lambda_{n_l,k} - \lambda_k| \leq \frac1{l \cdot 2^k} $.
The numbers $\lambda_{n,k}$ satisfy:
$$\sum_{j \geq k}\lambda_{n,j} \leq \frac1k \quad \mbox{and} \quad \sum_{k} \lambda_{n,k} =1.$$
We conclude that $\sum_{j=1}^k \lambda_j \geq 1-1/k$. Also, by Fatou's Lemma
$$\sum_{k} \lambda_k \leq \liminf_{l \to \infty} \left(\sum_{k} \lambda_{n_l,k}\right) =1,$$
and hence, $\sum_{k} \lambda_k =1$. We
conclude that $\nu=\sum_{k} \lambda_k \nu_{k}$ defines a probability measure on $X$. 
Moreover,
\begin{eqnarray*}d_W(\mu_{n_l},\nu) &\leq& 
d_W\left(\mu_{n_l},\sum_{k} \lambda_k \mu_{n_l,k}\right) + 
d_W\left(\sum_{k} \lambda_k \mu_{n_l,k},\nu\right)\\
&\leq& \sum_{k} |\lambda_{n_l,k} - \lambda_{k} | \int_X d(x_0,x) d\mu_{n_l,k}+ \sum_{k} \lambda_k d_W(\mu_{n_l,k},\mu_k)\\
&\leq& \sum_k \frac1{l \cdot 2^k} + \sum_k \frac{\lambda_k}l = \frac2l\end{eqnarray*}
To estimate the first summand in the second inequality, we have used that
$$d_W(\mu,\nu) = \sup\left\{ \left|\int_X f(x) d\mu(x) - \int_X f(x) d\nu(x) \right| \mid f\colon X \to \Rz 
\mbox{ contractive}\right\}.$$
This is the Theorem of Kantorovich and Rubinstein, see \cite{kanrub}. In our special case, we get
\begin{eqnarray*}
d_W\left(\sum_{k} \lambda_{n_l,k} \mu_{n_l,k},\sum_{k} \lambda_k \mu_{n_l,k}\right) &\leq& 
\sum_{k}  |\lambda_{n_l,k} - \lambda_{k} | \int_X d(x_0,x) d\mu_{n_l,k} 
\end{eqnarray*}
This proves that $B_1$ is compact. A similar argument applies to $B_{\lambda}$. Since each bounded set is contained
in $B_{\lambda}$ for some $\lambda \in \Rz$, this finishes the proof.
\end{proof}

\begin{bibdiv}
\begin{biblist}

\bib{bilu}{article}{
   author={Bilu, Y.},
   title={Limit distribution of small points on algebraic tori},
   journal={Duke Math. J.},
   volume={89},
   date={1997},
   number={3},
   pages={465--476},
   issn={0012-7094},
}

\bib{5authors}{article}{
   author={Dodziuk, J.},
   author={Linnell, P.},
   author={Mathai, V.},
   author={Schick, T.},
   author={Yates, S.},
   title={Approximating $L\sp 2$-invariants and the Atiyah conjecture},
   note={Dedicated to the memory of J\"urgen K. Moser},
   journal={Comm. Pure Appl. Math.},
   volume={56},
   date={2003},
   number={7},
   pages={839--873},
   issn={0010-3640},
}

\bib{fek}{article}{
   author={Fekete, M.},
   title={\"Uber die Verteilung der Wurzeln bei gewissen algebraischen
   Gleichungen mit ganzzahligen Koeffizienten},
   journal={Math. Z.},
   volume={17},
   date={1923},
   number={1},
   pages={228--249},
   issn={0025-5874},
}

\bib{feksze}{article}{
   author={Fekete, M.},
   author={Szeg{\"o}, G.},
   title={On algebraic equations with integral coefficients whose roots
   belong to a given point set},
   journal={Math. Z.},
   volume={63},
   date={1955},
   pages={158--172},
   issn={0025-5874},
}

\bib{gs}{article}{
   author={Gibbs, A.},
   author={Su, F.},
   title={On choosing and bounding probability metrics},
   journal={International Statistical Review},
   volume={70},
   date={2002},
   pages={419--435}
}

\bib{kanrub}{article}{
   author={Kantorovi{\v{c}}, L. V.},
   author={Rubin{\v{s}}te{\u\i}n, G. {\v{S}}.},
   title={On a space of completely additive functions},
   language={Russian, with English summary},
   journal={Vestnik Leningrad. Univ.},
   volume={13},
   date={1958},
   number={7},
   pages={52--59},
   issn={0146-924x},
 }

\bib{lin}{article}{
   author={Lin, H.},
   title={Almost commuting selfadjoint matrices and applications},
   conference={
      title={Operator algebras and their applications (Waterloo, ON,
      1994/1995)},
   },
   book={
      series={Fields Inst. Commun.},
      volume={13},
      publisher={Amer. Math. Soc.},
      place={Providence, RI},
   },
   date={1997},
   pages={193--233},
}

\bib{lueck}{book}{
   author={L{\"u}ck, W.},
   title={$L\sp 2$-invariants: theory and applications to geometry and
   $K$-theory},
   series={Ergebnisse der Mathematik und ihrer Grenzgebiete. 3. Folge. A
   Series of Modern Surveys in Mathematics},
   volume={44},
   publisher={Springer-Verlag},
   place={Berlin},
   date={2002},
   pages={xvi+595},
   isbn={3-540-43566-2},
 }

\bib{Mo}{article}{
   author={Motzkin, Th.},
   title={From among $n$ conjugate algebraic integers, $n-1$ can be
   approximately given},
   journal={Bull. Amer. Math. Soc.},
   volume={53},
   date={1947},
   pages={156--162},
}

\bib{za1}{article}{
   author={Petracovici, B.},
   author={Petracovici, L.},
   author={Zaharescu, A.},
   title={A new distance between Galois orbits over a number field},
   journal={Math. Sci. Res. J.},
   volume={8},
   date={2004},
   number={1},
   pages={1--15},
   issn={1537-5978},
}

\bib{pot}{book}{
   author={Pommerenke, Chr.},
   title={Univalent functions},
   note={With a chapter on quadratic differentials by Gerd Jensen;
   Studia Mathematica/Mathematische Lehrb\"ucher, Band XXV},
   publisher={Vandenhoeck \& Ruprecht},
   place={G\"ottingen},
   date={1975},
   pages={376},
}

\bib{rumely}{article}{
   author={Rumely, R.},
   title={On Bilu's equidistribution theorem},
   conference={
      title={Spectral problems in geometry and arithmetic (Iowa City, IA,
      1997)},
   },
   book={
      series={Contemp. Math.},
      volume={237},
      publisher={Amer. Math. Soc.},
      place={Providence, RI},
   },
   date={1999},
   pages={159--166},
}

\bib{sze}{article}{
   author={Szeg{\"o}, G.},
   title={Bemerkungen zu einer Arbeit von Herrn M. Fekete: \"Uber die
   Verteilung der Wurzeln bei gewissen algebraischen Gleichungen mit
   ganzzahligen Koeffizienten},
   journal={Math. Z.},
   volume={21},
   date={1924},
   number={1},
   pages={203--208},
}

\bib{thom2}{article}{
   author={Thom, A.},
   title={Sofic groups and diophantine approximation},
   year={2008}
   journal={Comm.\ Pure Appl. Math.},
   status={to appear},
}

\bib{weil}{book}{
   author={Weil, Andr{\'e}},
   title={Basic number theory},
   series={Classics in Mathematics},
   note={Reprint of the second (1973) edition},
   publisher={Springer-Verlag},
   place={Berlin},
   date={1995},
   pages={xviii+315},
   isbn={3-540-58655-5},
}

\end{biblist}
\end{bibdiv}
\end{document}